\theoremstyle{theorem}
\newtheorem{corollary}{Corollary}
\newtheorem{lemma}[corollary]{Lemma}
\newtheorem{lemma*}[lem6]{Lemma}
\newtheorem{proposition}[corollary]{Proposition}
\newtheorem{theorem}[corollary]{Theorem}
\begin{document}

\AtEndDocument{%
  \par
  \medskip
  \begin{tabular}{@{}l@{}}%
    \textsc{Gabriel Coutinho}\\
    \textsc{Dept. of Computer Science} \\ 
    \textsc{Universidade Federal de Minas Gerais, Brazil} \\
    \textit{E-mail address}: \texttt{gabriel@dcc.ufmg.br} \\ \ \\
    \textsc{Emanuel Juliano} \\
    \textsc{Dept. of Computer Science} \\ 
    \textsc{Universidade Federal de Minas Gerais, Brazil} \\
    \textit{E-mail address}: \texttt{emanuelsilva@dcc.ufmg.br}\\ \ \\
    \textsc{Thomás Jung Spier} \\
    \textsc{Dept. of Computer Science} \\ 
    \textsc{Universidade Federal de Minas Gerais, Brazil} \\
    \textit{E-mail address}: \texttt{thomasjung@dcc.ufmg.br}
  \end{tabular}}

\title{No perfect state transfer in trees with more than 3 vertices}
\author{Gabriel Coutinho\footnote{gabriel@dcc.ufmg.br --- remaining affiliations in the end of the manuscript.} \and Emanuel Juliano \and Thomás Jung Spier}
\date{\today}
\maketitle
\vspace{-0.8cm}

\begin{abstract} 
    We prove that the only trees that admit perfect state transfer according to the adjacency matrix model are $P_2$ and $P_3$. This answers a question first asked by Godsil in 2012 and proves a conjecture by Coutinho and Liu from 2015.
\end{abstract}

\begin{center}
\textbf{Keywords}
perfect state transfer ; trees ; quantum walks
\end{center}
\begin{center}
\textbf{MSC}
05C50 ; 81P45 ; 15A16
\end{center}


\section{Introduction}\label{intro}

Let $G$ be a graph and $A$ its adjacency matrix. The graph is used to model a (continuous-time) quantum walk: each vertex represents a qubit and edges represent interaction. Once a particular type of initial state is set in the system, a time-dependent evolution governed by Schr\"{o}dinger's equation takes place, and this evolution is given by the matrix
\[
	\exp(\ii t A),
\]
with $t \geq 0$. Quantum walks have been extensively studied, both their theoretical applications \cite{QwalkReview2022} and experimental runs \cite{ImplementationQwalkIBM}, and can be used to implement quantum algorithms and quantum information protocols \cite{ChildsUniversalQComputation}. This paper concerns one of these protocols known as perfect state transfer. As the name suggests, it consists of initializing the quantum network with all qubits at the same state, then inputting an orthogonal state to a particular qubit, and observing after some time an evolution that allows for a complete recovery of that special state somewhere else in the network. This problem has been extensively studied by several communities \cite{CoutinhoGodsilSurvey}. Most theoretical results typically fall into two classes: either new constructions of families of graphs admitting perfect state transfer are found, or necessary conditions are exploited to show natural impediments or that it does not occur  for some graphs. This paper falls into the second category.

In 2012, Godsil \cite{GodsilStateTransfer12} asked if perfect state transfer occurs in a tree. Back then the question was motivated perhaps by curiosity, but it has since picked up relevance because the capability of building a network with few edges in which state transfer occurs at large combinatorial distances has been well motivated \cite{KayLimbo}. Trees are, naturally, the obvious candidates to be studied. The paths on 2 and 3 vertices admit perfect state transfer, but no other. Some paths of arbitrarily long length admit pretty good state transfer, an approximation of perfect state transfer, but at the cost of increasingly long (and difficult to find) waiting times. In 2015 Coutinho and Liu \cite{CoutinhoLiu2} proved that in the Laplacian matrix model, that in which the quantum walk is given by $\exp(\ii t L)$, there is no perfect state transfer in trees but for $P_2$. They also showed that if a tree (again, except for $P_2$) contains a perfect matching, then there is no perfect state transfer according to the adjacency matrix model, and conjectured that the hypothesis on the matchings is not necessary for trees with at least four vertices. In this paper we prove this conjecture.

\section{Preliminaries} \label{sec:prelim}

For the purposes of this paper, we can treat the topic in a pure mathematical formalism. Let $G$ be a graph, with vertices $i$ and $j$. We use the bra-ket notation: $\ket i$ represents the characteristic vector of a vertex, sometimes denoted by $e_i$, and $\bra i$ its conjugate transpose.  Perfect state transfer from $i$ to $j$ is defined as there existing $t > 0$ and $\lambda \in \Cds$ so that
\[
	\exp(\ii t A) \ket i = \lambda \ket j.
\] 

\subsection{Strong cospectrality}

Using the spectral decomposition of $A$ into a sum of distinct eigenvalues times each eigenprojector, say denoted by $A(G) = \sum \theta_r E_r$, it is easy to show that perfect state transfer implies $E_r \ket i = \pm E_r \ket j$ for all $r$. If this occurs, vertices $i$ and $j$ are called strongly cospectral. Those eigenvalues $\theta_r$ for which $E_r \ket i \neq 0$ are called the eigenvalue support of $i$. We will denote by $\Phi(G)$ the set of eigenvalues of a graph $G$, and for a vertex $i$ of $G$ we write $\Phi_i(G)$ (or simply $\Phi_i$) for its eigenvalue support. The characteristic polynomial of $A(G)$ (in the variable $t$) will be denoted by $\phi^G$. It is easy to see that if $i$ and $j$ are strongly cospectral, then for all $r$ we have $(E_r)_{ii} = (E_r)_{jj}$. This is known to be equivalent to $\phi^{G\setminus i}=\phi^{G\setminus j}$, in which case the vertices are called cospectral. For a lengthy discussion about strongly cospectral vertices and related concepts, we refer to \cite{godsil2017strongly}). To efficiently decide whether or not two vertices are strongly cospectral can be done by means of the following result.

\begin{theorem}[Corollary 8.4 in \cite{godsil2017strongly}]\label{thm:strcospec}
Vertices $i$ and $j$ of a graph $G$ are strongly cospectral if and only if $\phi^{G \setminus i} = \phi^{G \setminus j}$ and all poles of $\phi^{G\setminus \{i,j\}}/\phi^G$ are simple.
\end{theorem}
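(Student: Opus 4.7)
The plan is to translate both conditions into statements about the spectral idempotents $E_r$. The key algebraic tool I would use is the Jacobi/Dodgson determinantal identity applied to the resolvent $M(t) := (tI - A)^{-1}$, which yields
$$\frac{\phi^{G\setminus\{i,j\}}(t)}{\phi^G(t)} = M_{ii}(t)\, M_{jj}(t) - M_{ij}(t)^2.$$
The walk generating functions $M_{ab}(t) = \sum_r (E_r)_{ab}/(t-\theta_r)$ (sum over distinct eigenvalues) have only simple poles, with residues equal to the entries of the spectral idempotents. This identity, paired with partial-fraction bookkeeping, will convert the analytic hypothesis on the ratio $\phi^{G\setminus\{i,j\}}/\phi^G$ into a purely algebraic condition on the $(E_r)_{ab}$.

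Next I would unpack the two hypotheses separately. The hypothesis $\phi^{G \setminus i} = \phi^{G \setminus j}$ is equivalent to $M_{ii}(t) = M_{jj}(t)$, which by comparing residues is the same as $(E_r)_{ii} = (E_r)_{jj}$ for every eigenvalue $\theta_r$. For the second hypothesis, I would observe that the only way the rational function $M_{ii}M_{jj} - M_{ij}^2$ can develop a pole of order $2$ at $\theta_s$ is through the product of the leading singular terms, giving a $1/(t-\theta_s)^2$ coefficient equal to $(E_s)_{ii}(E_s)_{jj} - (E_s)_{ij}^2$. Hence all poles being simple is equivalent to $(E_s)_{ii}(E_s)_{jj} = (E_s)_{ij}^2$ for every $s$.

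Combining the two, $(E_s)_{ij}^2 = (E_s)_{ii}^2$, so $(E_s)_{ij} = \pm (E_s)_{ii}$. Since $E_s$ is a symmetric orthogonal projection, the vectors $u = E_s \ket i$ and $v = E_s \ket j$ satisfy $\|u\|^2 = (E_s)_{ii}$, $\|v\|^2 = (E_s)_{jj}$, and $\langle u, v\rangle = (E_s)_{ij}$. The relations above are precisely the equality case of Cauchy--Schwarz, forcing $u$ and $v$ to be parallel with sign determined by the sign of $(E_s)_{ij}/(E_s)_{ii}$. Thus $E_s \ket i = \pm E_s \ket j$ for every $s$, which is the definition of strong cospectrality. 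Each step in the chain is an equivalence, so the converse follows by reversing it.

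I do not anticipate a major obstacle; the whole argument is a short residue computation resting on the Jacobi identity. The one point that requires a moment's care is eigenvalues outside the support of $i$: if $(E_s)_{ii} = 0$, then the first hypothesis forces $(E_s)_{jj} = 0$, and Cauchy--Schwarz then forces $(E_s)_{ij} = 0$ as well, so the double-pole coefficient vanishes automatically and strong cospectrality at $\theta_s$ is trivial. Getting the Jacobi identity stated correctly (with the right signs and indices) is really the only computational hazard, and once in hand, the rest is bookkeeping on partial fractions.
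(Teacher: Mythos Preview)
The paper does not supply a proof of this theorem; it is quoted from Godsil and Smith~\cite{godsil2017strongly} and used as a black box, so there is nothing in the present paper to compare your argument against.

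That said, your proposal is correct. The Jacobi identity applied to $tI-A$ indeed gives
\[
\frac{\phi^{G\setminus\{i,j\}}}{\phi^G} = M_{ii}M_{jj}-M_{ij}^2,
\]
and since each $M_{ab}$ has only simple poles, the coefficient of $(t-\theta_s)^{-2}$ on the right is exactly $(E_s)_{ii}(E_s)_{jj}-(E_s)_{ij}^2$. Cospectrality turns this into $(E_s)_{ii}^2=(E_s)_{ij}^2$, which is the equality case of Cauchy--Schwarz for $E_s\ket i$ and $E_s\ket j$, forcing $E_s\ket i = \pm E_s\ket j$. Your remark on eigenvalues outside the support of $i$ correctly disposes of the degenerate case, and each step reverses, giving the converse. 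This is essentially the argument in~\cite{godsil2017strongly}.
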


It is easy to see that if there is an automorphism of the graph $G$ that maps the vertex $i$ to the vertex $j$, then $i$ and $j$ are cospectral, however the same does not hold for strong cospectrality. Note also that as a consequence of Theorem~\ref{thm:strcospec}, if $i$ and $j$ are in distinct connected components of the graph $G$, then $i$ and $j$ are not strongly cospectral.

\subsection{Perfect state transfer} \label{sec:pst}

Deciding whether or not perfect state transfer occurs can be done in polynomial time \cite{CoutinhoGodsilPSTpolytime} by an application of the following characterization theorem.
\begin{theorem}[Theorem 2.4.4 in \cite{CoutinhoPhD}]
	 \label{thm:pstcha}
	Let $G$ be a graph, $A(G) = \sum \theta_r E_r$ the spectral decomposition of its adjacency matrix, and let $i,j \in V(G)$. There is perfect state transfer between $i$ and $j$ at time $t$ if and only if the following conditions hold.
	\begin{enumerate}[(a)]
		\item $E_r \ket i = \sigma_r E_r \ket j$, with $\sigma_r \in \{-1,+1\}$ (that is, $i$ and $j$ are strongly cospectral.)
		\item There is an integer $a$, a square-free positive integer $\Delta$ (possibly equal to 1), so that for all $\theta_r$ in the support of $i$, there is $b_r$ giving
			\[
				\theta_r = \frac{a + b_r \sqrt{\Delta}}{2}.
			\]
			In particular, because $\theta_r$ is an algebraic integer, it follows that all $b_r$ have the same parity as $a$.
		\item There is $g \in \Zds$ so that, for all $\theta_r$ in the support of $i$, $(b_0 - b_r)/g = k_r$, with $k_r \in \Zds$, and ${k_r \equiv (1-\sigma_r)/2 \pmod 2}$.
	\end{enumerate}
	If the conditions hold, then the positive values of $t$ for which perfect state transfer occurs are precisely the odd multiples of $\pi/(g \sqrt{\Delta})$.
\end{theorem}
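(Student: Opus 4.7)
The plan is to use the spectral decomposition to reduce the matrix equation $\exp(\ii t A)\ket i = \lambda\ket j$ to a family of scalar equations indexed by the eigenvalues in $\Phi_i$, and then analyze this family first by linear algebra (yielding (a)) and then by algebraic number theory (yielding (b) and (c)). For the first part, I would apply each eigenprojector $E_s$ to both sides of the defining equation; since $E_s$ commutes with $\exp(\ii t A)$, this gives $\exp(\ii t\theta_s)\,E_s\ket i = \lambda\,E_s\ket j$. When $s \in \Phi_i$ the vector $E_s\ket i$ is a nonzero real vector, so the scalar $\lambda\exp(-\ii t\theta_s)$ must itself be real of modulus one. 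This forces $|\lambda|=1$, defines $\sigma_s := \lambda\exp(-\ii t\theta_s) \in \{-1,+1\}$, and proves $E_s\ket i = \sigma_s E_s\ket j$, which is condition (a).

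With (a) established, the identity $\exp(\ii t\theta_r) = \lambda \sigma_r$ for every $r \in \Phi_i$ reduces, after fixing some $\theta_0 \in \Phi_i$ and dividing, to $\exp(\ii t(\theta_r - \theta_0)) = \sigma_r \sigma_0$. Normalizing $\sigma_0=1$, this means $t(\theta_r - \theta_0) = \pi k_r$ for an integer $k_r$, with $k_r$ even iff $\sigma_r=1$, matching the parity statement of (c). In particular the ratios $(\theta_r - \theta_0)/(\theta_s - \theta_0)$ are all rational, and combined with the fact that the $\theta_r$ are algebraic integers (roots of $\phi^G \in \Zds[t]$) and that $\Phi_i$ is closed under Galois conjugation (since $E_\theta$ is a polynomial in $A$ with rational coefficients, $\ket i$ has integer entries, and so $\sigma(E_\theta\ket i) = E_{\sigma(\theta)}\ket i$), a Galois-theoretic argument shows that the $\Qds$-span of $\{\theta_r - \theta_0 : r \in \Phi_i\}$ is contained in a single quadratic extension $\Qds(\sqrt{\Delta})$ of $\Qds$ with $\Delta$ square-free. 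Writing the resulting algebraic integers of this quadratic field yields $\theta_r = (a + b_r\sqrt{\Delta})/2$ with $a,b_r$ of the same parity, giving (b). The parameter $g$ in (c) is then chosen so that $k_r = (b_0 - b_r)/g$ is integer for every $r$; the time formula follows by inverting $t(\theta_r - \theta_0) = \pi k_r$.

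For sufficiency, given (a), (b), (c) and $t$ an odd multiple of $\pi/(g\sqrt{\Delta})$, I would substitute into $\exp(\ii t A) = \sum_r \exp(\ii t\theta_r) E_r$ and verify by direct computation that $\exp(\ii t\theta_r) = \sigma_r \exp(\ii t\theta_0)$ for every $r \in \Phi_i$; then, using $\sigma_r E_r\ket i = E_r\ket j$ from (a), one obtains $\exp(\ii t A)\ket i = \exp(\ii t \theta_0)\sum_r \sigma_r E_r\ket i = \exp(\ii t\theta_0)\sum_r E_r\ket j = \exp(\ii t\theta_0)\ket j$, so $\lambda = \exp(\ii t\theta_0)$ works. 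The main obstacle is the Galois-theoretic step in the second paragraph: the necessity of (a) and the parity condition in (c) are fairly formal consequences of the spectral decomposition and Euler's formula, but securing a \emph{common} square-free $\Delta$ across all eigenvalues in $\Phi_i$ requires exploiting the compatible Galois action on $A$, its eigenvalues, and the support $\Phi_i$, together with the rationality of eigenvalue-difference ratios to rule out extensions of degree greater than two.
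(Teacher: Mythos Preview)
The paper does not prove this theorem: it is quoted as a preliminary result from \cite{CoutinhoPhD} (and condition (b) is attributed to Godsil \cite{GodsilPerfectStateTransfer12}), so there is no in-paper proof to compare against. Your outline is essentially the standard argument behind that cited result, and the logical skeleton---projecting the PST equation by $E_r$ to obtain (a), then dividing to get $t(\theta_r-\theta_0)\in\pi\Zds$ with the right parities, then using Galois closure of $\Phi_i$ together with rationality of the difference ratios to force a common quadratic extension---is correct.

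One small correction: it is not true that ``$E_\theta$ is a polynomial in $A$ with rational coefficients''; the Lagrange interpolant $E_\theta=\prod_{s\neq\theta}(A-sI)/(\theta-s)$ has coefficients in the splitting field of $\phi^G$. What \emph{is} true, and what you need, is that for any $\sigma\in\mathrm{Gal}(\overline{\Qds}/\Qds)$ one has $\sigma(E_\theta)=E_{\sigma(\theta)}$ entrywise, hence $\sigma\big((E_\theta)_{ii}\big)=(E_{\sigma(\theta)})_{ii}$, which gives the Galois closure of $\Phi_i$. You correctly flag the subsequent step---deducing from Galois closure plus rational ratios that all $\theta_r$ lie in a single $\Qds(\sqrt{\Delta})$---as the nontrivial one; in the literature this is typically done by showing first that $\theta_0$ (say the Perron eigenvalue, which lies in $\Phi_i$) has algebraic degree at most $2$, after which the rational ratios force every $\theta_r\in\Qds(\theta_0)$.
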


Condition (b) above is due to Godsil \cite{GodsilPerfectStateTransfer12} and forces a strong necessary condition: the distinct eigenvalues in the support of vertices involved in perfect state transfer have to be separated by at least $1$. 

Recently we showed a slight yet quite necessary strengthening of this last observation in~\cite{coutinho2023decorated}:

\begin{theorem}[Theorem 14 in~\cite{coutinho2023decorated}] \label{thm:bound_PST}
	Assume perfect state transfer occurs in $G$ at minimum time $t$. Then $t \leq \pi/\sqrt{2}$, and therefore either $g$ or $\Delta$ (as in Theorem~\ref{thm:pstcha}) are $\geq 2$. 
\end{theorem}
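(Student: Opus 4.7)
The conclusions ``$t\leq \pi/\sqrt 2$'' and ``either $g$ or $\Delta$ is $\geq 2$'' are in fact equivalent once Theorem~\ref{thm:pstcha} is in hand: for a positive integer $g$ and a square-free $\Delta\geq 1$, the inequality $g^2\Delta<2$ forces $g=\Delta=1$, so $\pi/(g\sqrt\Delta)\leq \pi/\sqrt 2$ is the same as forbidding the single pair $(g,\Delta)=(1,1)$. My plan is therefore to argue by contradiction, assuming perfect state transfer between distinct vertices $i$ and $j$ with parameters $g=1$ and $\Delta=1$, and to derive $\ket i=\ket j$.

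The core of the argument is to propagate parity information through conditions (b) and (c) of Theorem~\ref{thm:pstcha}. With $\Delta=1$, each eigenvalue $\theta_r\in\Phi_i$ equals the integer $(a+b_r)/2$, so every $b_r$ shares the parity of $a$ and every difference $b_0-b_r$ is even. Condition (c) with $g=1$ then gives $k_r=b_0-b_r$, an even integer, and the parity requirement $k_r\equiv (1-\sigma_r)/2\pmod 2$ immediately pins down $\sigma_r=+1$ for every eigenvalue in the support of $i$.

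To close the argument I would invoke strong cospectrality: $E_r\ket i=\sigma_r E_r\ket j$ for every $r\in\Phi_i=\Phi_j$, with both sides vanishing for $r$ outside the common support. Once every $\sigma_r$ is $+1$, summing over $r$ and using $\sum_r E_r=I$ yields $\ket i=\ket j$, contradicting the hypothesis $i\neq j$ that is implicit in perfect state transfer. The only step that requires any care is correctly propagating parity from condition (b) through condition (c); once the sign pattern collapses to the constant $+1$, the contradiction with strong cospectrality is immediate, and the stated bound $t\leq \pi/\sqrt 2$ follows at once from the formula for the minimum perfect state transfer time.
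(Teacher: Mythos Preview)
Your proof is correct and notably direct. You correctly observe that the conclusion is equivalent to ruling out $(g,\Delta)=(1,1)$, and your parity chase through conditions (b) and (c) of Theorem~\ref{thm:pstcha} is clean: with $\Delta=1$ all $b_r$ share the parity of $a$, so each $b_0-b_r$ is even; with $g=1$ this forces every $k_r$ even, hence every $\sigma_r=+1$, and summing $E_r\ket i=E_r\ket j$ over all $r$ (using $\sum_r E_r = I$ together with the fact that cospectral vertices have identical eigenvalue supports) gives $\ket i=\ket j$, contradicting $i\neq j$.

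The paper itself does not prove this result in the main body; it quotes Theorem~14 of \cite{coutinho2023decorated}. However, the \LaTeX\ source contains, after \verb|\end{document}|, a draft argument that proceeds quite differently. That draft first establishes an auxiliary proposition: if perfect state transfer occurs and the eigenvalue support lies in $\mathds Z$, then it lies entirely in one residue class modulo $2$ (so the minimum gap is in fact $2$, not merely $\sqrt 2$). The proof there assumes a parity split between $\Phi_{ij}^{+}$ and $\Phi_{ij}^{-}$, reduces the monic integer polynomials whose zero sets are $\Phi_{ij}^{+}$ and $\Phi_{ij}^{-}$ modulo $2$, and derives a contradiction in $\mathds F_2[x]$ via a $\gcd$ argument. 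Theorem~\ref{thm:bound_PST} then follows because a support gap below $\sqrt 2$ forces $\Delta=1$ and a gap equal to $1$, i.e.\ mixed parity, contradicting the proposition.

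Compared with that route, your argument is more elementary and exactly targeted at the statement as phrased: it uses nothing beyond the characterization in Theorem~\ref{thm:pstcha} and the resolution of the identity. The draft's $\mathds F_2[x]$ detour buys something extra --- the integer support actually sits in a single parity class, so the gap is at least $2$ --- but that strengthening is not needed for Theorem~\ref{thm:bound_PST} as stated.
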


\subsection{Interlacing polynomials}\label{sec:interlacing}

In this subsection, we recall two basic facts about interlacing polynomials that will be used in our proofs. Let $p$ and $q$ be real polynomials with real zeros. Assume that $p$ and $q$ have $n$ and $n-1$ distinct zeros, respectively. We say that $p$ and $q$ \textit{interlace} if between every two zeros of $q$ there is a zero of $p$ and vice-versa. We say that $p$ and $q$ \textit{strictly interlace} if they interlace and do not share zeros.

\begin{lemma}\label{lem:interlacing_1}[p. 150 in~\cite{GodsilAlgebraicCombinatorics}] The polynomials $p$ and $q$ interlace if, and only if, 
\[\dfrac{q}{p}(t)=\displaystyle\sum_{\ell=1}^n\dfrac{\lambda_\ell}{t-s_\ell},\] 
where $s_\ell$ are the zeros of $p$ and $\lambda_\ell\geq 0$ for every $\ell$. Furthermore, $p$ and $q$ strictly interlace if, and only if, $\lambda_\ell> 0$ for every $\ell$.
\end{lemma}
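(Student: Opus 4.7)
The plan is to base everything on the partial fraction expansion. Since $\deg p = n$, $\deg q = n-1$, and $p$ has simple zeros $s_1 < \cdots < s_n$, standard linear algebra yields
\[
\frac{q(t)}{p(t)} = \sum_{\ell=1}^n \frac{\lambda_\ell}{t - s_\ell}, \qquad \lambda_\ell = \frac{q(s_\ell)}{p'(s_\ell)},
\]
so the expansion itself is automatic; the content of the lemma is identifying the sign of $\lambda_\ell$ with interlacing.

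For the forward direction, write $p(t) = c\prod_k(t-s_k)$ and $q(t) = c'\prod_k(t-r_k)$, where $r_1 < \cdots < r_{n-1}$ are the zeros of $q$. Then
\[
p'(s_\ell) = c\prod_{k\neq\ell}(s_\ell - s_k), \qquad q(s_\ell) = c'\prod_k (s_\ell - r_k).
\]
The interlacing $s_1 \le r_1 \le s_2 \le \cdots \le r_{n-1} \le s_n$ forces both products to contain exactly $\ell - 1$ positive and $n - \ell$ negative factors, so their signs agree. Hence $\lambda_\ell \ge 0$ provided $c'/c > 0$, a convention fixed by observing that $\sum_\ell \lambda_\ell = c'/c$ (from the $t \to \infty$ expansion $q/p \sim (c'/c)/t$). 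Under strict interlacing, every factor $s_\ell - r_k$ is nonzero, giving $\lambda_\ell > 0$.

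For the converse, differentiating the partial fraction expansion gives
\[
\left(\frac{q}{p}\right)'(t) = -\sum_{\ell=1}^n \frac{\lambda_\ell}{(t-s_\ell)^2},
\]
which is nonpositive wherever defined, so $q/p$ is monotone nonincreasing on each open interval $(s_\ell, s_{\ell+1})$. When $\lambda_\ell, \lambda_{\ell+1} > 0$, the function blows up to $+\infty$ at $s_\ell^+$ and $-\infty$ at $s_{\ell+1}^-$, producing a zero of $q$ strictly inside $(s_\ell, s_{\ell+1})$. Counting one zero per interval accounts for all $n-1$ zeros of $q$, yielding strict interlacing. In the weak case a vanishing $\lambda_\ell$ forces $q(s_\ell) = 0$, so $s_\ell$ becomes a shared zero of $p$ and $q$; dividing both by $(t-s_\ell)$ for every such index and applying the strict case to the reduced pair recovers interlacing in general.

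The main subtlety is bookkeeping: pinning down the sign of $c'/c$ in the forward direction, and treating the common-zero case $\lambda_\ell = 0$ in the converse without miscounting zeros. Everything else is the intermediate value theorem applied to a monotone rational function, so the argument is essentially forced once the residue formula is in hand.
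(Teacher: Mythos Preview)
The paper does not supply its own proof of this lemma; it is quoted directly from Godsil's textbook without argument. Your proof via the residue formula $\lambda_\ell = q(s_\ell)/p'(s_\ell)$, the sign count on the factored forms, and the monotonicity of $q/p$ on each interval $(s_\ell,s_{\ell+1})$ is the standard one and is correct. Your remark that the statement tacitly assumes the leading coefficients of $p$ and $q$ share a sign (equivalently $\sum_\ell \lambda_\ell = c'/c > 0$) is well placed; in all of the paper's applications both polynomials are monic characteristic polynomials, so this is automatic.
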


There is also the following characterization of interlacing when we instead consider the quotient $p/q$. It is quite straightforward from the above, but we add its proof for clarity.

\begin{lemma}\label{lem:interlacing_2} The polynomials $p$ and $q$ interlace if, and only if, 
\[\dfrac{p}{q}(t)=x-s-\displaystyle\sum_{\ell=1}^{n-1}\dfrac{\lambda_\ell}{t-s_\ell},\] where $s_\ell$ are the zeros of $q$ and $\lambda_\ell\geq 0$ for every $\ell$. Furthermore, $p$ and $q$ strictly interlace if, and only if, $\lambda_\ell> 0$ for every $\ell$. 
\end{lemma}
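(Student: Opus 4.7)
The plan is to derive the lemma as a direct consequence of Lemma~\ref{lem:interlacing_1} together with partial fractions. After rescaling I may assume $p$ and $q$ are monic, so that the leading linear term of $p/q$ has coefficient $1$. Polynomial division yields $p = (t-s)q + r$ with $\deg r \le n-2$, and expanding the proper rational function $r/q$ in partial fractions produces
\[
    \frac{p}{q}(t) = (t-s) + \sum_{\ell=1}^{n-1}\frac{\mu_\ell}{t-s_\ell}, \qquad \mu_\ell = \frac{p(s_\ell)}{q'(s_\ell)}.
\]
So the claimed decomposition always exists, with $\lambda_\ell = -\mu_\ell$; the entire content of the lemma is the sign of the $\lambda_\ell$.

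For the forward direction, assume $p$ and $q$ interlace and apply Lemma~\ref{lem:interlacing_1} to obtain $(q/p)(t) = \sum_{m=0}^{n-1}\alpha_m/(t-r_m)$ with $\alpha_m \ge 0$, where the $r_m$ are the zeros of $p$. Differentiating this series gives $(q/p)'(t) = -\sum_m \alpha_m/(t-r_m)^2 \le 0$. On the other hand, the quotient rule at any $s_\ell$ with $p(s_\ell)\ne 0$ gives $(q/p)'(s_\ell) = q'(s_\ell)/p(s_\ell)$, using $q(s_\ell)=0$. Comparing the two expressions yields $\mu_\ell = p(s_\ell)/q'(s_\ell) \le 0$, hence $\lambda_\ell\ge 0$. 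When $p(s_\ell)=0$ (non-strict interlacing at $s_\ell$), the residue formula already forces $\mu_\ell=0$. The strict case of Lemma~\ref{lem:interlacing_1} (all $\alpha_m>0$) translates through the same computation into $\lambda_\ell>0$ for every $\ell$.

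For the converse, the hypothesized expansion makes the real function $f := p/q$ satisfy
\[
    f'(t) = 1 + \sum_\ell \frac{\lambda_\ell}{(t-s_\ell)^2} \ge 1
\]
wherever $f$ is defined, so $f$ is strictly increasing on each maximal interval of continuity, namely the intervals carved out by the actual poles (the $s_\ell$ with $\lambda_\ell>0$). The boundary behaviour ($f\to\pm\infty$ with the correct signs at actual poles and as $t\to\pm\infty$) combined with the intermediate value theorem deposits exactly one zero of $f$ in each such interval. The remaining zeros of $p$ lie at the $s_\ell$ with $\lambda_\ell=0$, since in that case the residue formula forces $p(s_\ell)=0$. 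Counting these together supplies the $n$ distinct real zeros of $p$ and places them in the interlacing pattern relative to the $s_\ell$.

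The only genuine subtlety is bookkeeping in the non-strict case: when some $\lambda_\ell=0$, one must verify that the zero of $p$ at $s_\ell$ together with the zero delivered by the intermediate value theorem inside the (now enlarged) continuity interval containing $s_\ell$ are not double-counted and arrange themselves correctly to yield the full interlacing pattern. I expect this case analysis to be the only mildly tedious step; everything else is a formal manipulation guided by Lemma~\ref{lem:interlacing_1}.
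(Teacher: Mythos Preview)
Your proposal is correct. The converse direction (from the partial-fraction expansion to interlacing) is identical to the paper's: both differentiate $p/q$, observe the derivative exceeds $1$, and use monotonicity on each branch. The forward direction, however, is handled differently. The paper writes $p(t)=(t-s)q(t)-b\cdot r(t)$ and invokes an external result (Lemma~5.1 in Godsil's \emph{Algebraic Combinatorics}) to get $b>0$ and that $q$ and $r$ interlace, then applies Lemma~\ref{lem:interlacing_1} to $r/q$. You instead compute the residues directly as $\mu_\ell=p(s_\ell)/q'(s_\ell)$ and determine their sign by applying Lemma~\ref{lem:interlacing_1} to the reciprocal $q/p$, differentiating, and evaluating at $s_\ell$ via the quotient rule. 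Your route is more self-contained, since it uses only Lemma~\ref{lem:interlacing_1} and elementary calculus rather than an outside three-term lemma; the paper's route, on the other hand, makes the recursive structure $p\leftrightarrow q\leftrightarrow r$ explicit. One small remark: in the strict case your argument actually shows $(q/p)'(s_\ell)<0$ strictly for every $\ell$ (the series of negative terms cannot vanish unless $q\equiv 0$), so the strictness of $\lambda_\ell>0$ follows simply from $p(s_\ell)\neq 0$ rather than from all $\alpha_m>0$ as you phrased it; the conclusion is unaffected.
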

\begin{proof} We prove the second part of the statement, the first one is analogous. Assume that 
\[\dfrac{p}{q}(t) = t-s-\displaystyle\sum_{\ell=1}^{n-1}\dfrac{\lambda_\ell}{t-s_\ell},\]
where $s_1<\dots<s_{n-1}$ are the zeros of $q$ and $\lambda_\ell>0$ for every $\ell$. Then 
\[\left(\dfrac{p}{q}\right)'(t)=1+\displaystyle\sum_{i=1}^{n-1}\dfrac{\lambda_\ell}{(t-s_\ell)^2}>1\] 
for every $t$ not in $\{s_1,\dots,s_{n-1}\}$. It follows that $(p/q)(t)$ is strictly increasing and has a unique zero in each of its $n$ branches $(-\infty, s_1)$, $(s_1,s_2)$, \dots, $(s_{n-1},\infty)$. These zeros are precisely the zeros of $p$. Thus $p$ and $q$ strictly interlace.

Now assume that $p$ and $q$ strictly interlace. Since $\deg p= \deg q+1$ we have $p(t)=(t-s)q(t)-b\cdot r(t)$, where $r$ is monic of degree $n-2$ and $b$ is real. In the proof of Lemma 5.1 in~\cite{GodsilAlgebraicCombinatorics} it is proved that in this situation $b>0$ and $q$ strictly interlaces $r$. Thus by Lemma~\ref{lem:interlacing_1} we can write 
\[\dfrac{r}{q}(t)=\displaystyle\sum_{i=1}^{n-1}\dfrac{\lambda_\ell}{t-s_\ell}\] where $s_\ell$ is a zero of $q$ and $\lambda_\ell>0$ for every $\ell$. Therefore 
\[\dfrac{p}{q}(t)= t-s-b\cdot \dfrac{r}{q}(t)=t-s-\displaystyle\sum_{\ell=1}^{n-1}\dfrac{b\lambda_\ell}{t-s_\ell}, \] where $b\lambda_\ell>0$ for every $\ell$. This finishes the proof.
\end{proof}

\subsection{Variational inequalities}

In this subsection we state an important result we will need. For a real symmetric $m \times m$ matrix $A$ we denote by $\theta_{m}(A)\leq \cdots \leq \theta_1(A)$ its eigenvalues. We also denote by 
\[\lVert A\rVert_{op}=\sup_{\braket{v}{v}=1}\lVert A\ket v \rVert=\max\{|\theta_1(A)|, |\theta_{m}(A)|\}\] its operator norm.


The next theorem is a standard result about eigenvalues of Hermitian matrices, but since we actually need the equality case, we provide a proof.

\begin{theorem}\label{thm:eigen_stability_2}[Corollary $III.2.6$ in~\cite{BhatiaMatrixAnalysis}] Let $A$ and $B$ be two real symmetric $m\times m$ matrices. Then, 
\[|\theta_\ell(A)-\theta_\ell(B)|\leq \lVert A-B\rVert_{op}, \]
for every $\ell$. If equality holds for $\ell$, then there is some vector $\ket v \neq 0$ in the subspace generated by the 1st and $m$-th eigenspaces of $A-B$ which is an $\ell$-th eigenvector of both $A$ and $B$.
\end{theorem}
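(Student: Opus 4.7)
The plan is to combine the Courant--Fischer variational characterization with a dimension count. Set $C = A - B$ and recall
\[
\theta_\ell(A) \;=\; \max_{\dim V = \ell}\,\min_{\ket{v}\in V,\,\braket{v}{v}=1}\bra{v}A\ket{v}.
\]
For the inequality, decompose $\bra{v}A\ket{v} = \bra{v}B\ket{v} + \bra{v}C\ket{v}$ and use $|\bra{v}C\ket{v}| \leq \lVert C \rVert_{op}$ to obtain $\theta_\ell(A) \leq \theta_\ell(B) + \lVert C \rVert_{op}$; swapping $A$ and $B$ gives the reverse direction. A small refinement of the same computation yields the sharper directional form of Weyl's inequality, $\theta_m(C) \leq \theta_\ell(A) - \theta_\ell(B) \leq \theta_1(C)$, which I will want for the equality analysis.

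For the equality case, up to swapping $A$ and $B$ assume $\theta_\ell(A) - \theta_\ell(B) = \lVert C \rVert_{op}$. The directional Weyl bound then forces $\lVert C \rVert_{op} = \theta_1(C)$, so I only need to find a common eigenvector lying in the top eigenspace of $C$. Define $W_A^+$ to be the span of the eigenspaces of $A$ for eigenvalues $\geq \theta_\ell(A)$ and $W_B^-$ the span of the eigenspaces of $B$ for eigenvalues $\leq \theta_\ell(B)$. Standard bookkeeping gives $\dim W_A^+ \geq \ell$ and $\dim W_B^- \geq m - \ell + 1$, so by inclusion--exclusion $W_A^+ \cap W_B^-$ is at least one-dimensional and contains some nonzero $\ket{v}$.

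For such $\ket{v}$, membership in $W_A^+$ and $W_B^-$ gives $\bra{v}A\ket{v} \geq \theta_\ell(A)\braket{v}{v}$ and $\bra{v}B\ket{v} \leq \theta_\ell(B)\braket{v}{v}$, hence
\[
\bra{v}C\ket{v} \;\geq\; (\theta_\ell(A) - \theta_\ell(B))\braket{v}{v} \;=\; \theta_1(C)\braket{v}{v}.
\]
Since $\theta_1(C)\braket{v}{v}$ is also the maximum of the Rayleigh quotient of $C$, all three inequalities are simultaneously tight. The tightness $\bra{v}C\ket{v} = \theta_1(C)\braket{v}{v}$ places $\ket{v}$ in the $\theta_1(C)$-eigenspace of $C$, and the tightness in the $A$ and $B$ inequalities, combined with $\ket{v} \in W_A^+ \cap W_B^-$, forces $\ket{v}$ to be a pure eigenvector of $A$ with eigenvalue $\theta_\ell(A)$ and of $B$ with eigenvalue $\theta_\ell(B)$. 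The opposite sign case is symmetric and produces a common $\ell$-th eigenvector of $A$ and $B$ lying in the $\theta_m(C)$-eigenspace instead, giving the full statement. The main subtlety I anticipate is using the \emph{directional} Weyl bound rather than just its absolute-value version, so that $\ket{v}$ is pinned down to a single extreme eigenspace of $C$; once that is in place, the dimension count and the Rayleigh-quotient equality conditions are routine.
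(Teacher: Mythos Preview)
Your proof is correct and follows essentially the same approach as the paper: Courant--Fischer for the inequality, then a dimension-count intersection and Rayleigh-quotient tightness analysis for the equality case. Your treatment of the equality case is in fact more explicit than the paper's---you carefully specify $W_A^+$ and $W_B^-$ as spans of eigenspaces, whereas the paper simply picks subspaces $V,W$ attaining the variational extrema and asserts that the intersection vector must be an $\ell$-th eigenvector; your version makes clear why that assertion holds.
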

\begin{proof} Consider $\ell$ in $\{1,\dots, m\}$. By the min-max principle~\cite[Corollary 111.1.2]{BhatiaMatrixAnalysis}, 
\[\theta_\ell(A)= \max_{\dim V = \ell}\,\min_{\substack{\ket v \in V \\ \lVert \ket v \rVert = 1 } } \bra v A \ket v\] and 
\[\theta_\ell(B)= \min_{\dim W = m-\ell+1}\,\max_{\substack{\ket v\in W\\ \lVert \ket v \rVert = 1 } } \bra v B \ket v.\]

Let $V$ and $W$ be subspaces of $\mathds{R}^{m}$ such that the maximum and minimum in the above equations are reached. Since $\dim V=\ell$ and $\dim W=m-\ell+1$, it holds $\dim V\cap W\geq 1$. Let $\ket v$ be a vector in $V\cap W$ with $\lVert \ket v \rVert = 1 $. Then, $\theta_\ell(A)\leq \bra v A \ket v$ and $\theta_\ell(B)\geq \bra v B \ket v$, and so 
\[\theta_\ell(A)-\theta_\ell(B)\leq \bra v A \ket v - \bra v B \ket v = \bra v (A-B) \ket v \leq \lVert A-B\rVert_{op}.\] 
Switching the roles of $A$ and $B$ we also have $\theta_\ell(B)-\theta_\ell(A)\leq \lVert A-B\rVert_{op}$. This proves the first part of the statement. 

If $\theta_\ell(A)-\theta_\ell(B) = \lVert A-B\rVert_{op}$, then notice that the vector $\ket v$ used in the inequalities above needs to be an $\ell$-th eigenvector of both $A$ and $B$ and be contained in the subspace generated by the $1$st and $m$-th eigenspaces of $A-B$. This proves the second part of the statement.
\end{proof}

\section{Main result}

In this section we state our main result and describe its proof strategy, leaving the details for the next section. 

Assume $i$ and $j$ are distinct vertices in $G$ with neighbors $i'$ and $j'$, respectively, such that $ii'$ and $jj'$ are cut-edges, each separating $i$ and $j$ into different connected components. We analyze two possibilities: either $i'=j$ and $j'=i$, and $ij$ is a cut-edge; or $i'\neq j$ and $j'\neq i$. In Figure~\ref{fig:graph_1} we illustrate these two possibilities.

\begin{figure}[H]
    \centering
    \includegraphics[width=14cm]{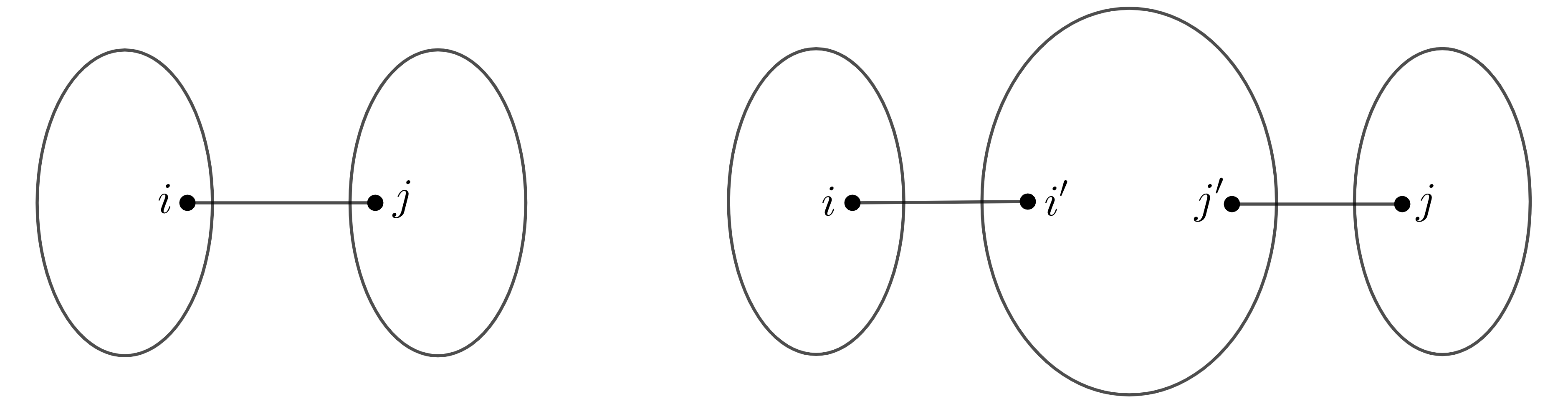}
    \caption{On the left side we represent a graph where $ij$ is a cut-edge. On the right side we illustrate a graph where the vertices $i$ and $j$ have neighbors $i'\neq j$ and $j'\neq i$, and $ii'$ and $jj'$ are cut-edges that each separates $i$ and $j$ into different connected components. Observe that it may be that $i'=j'$.}
    \label{fig:graph_1}
\end{figure}

The case in which $ij$ is a bridge, on the left side of Figure~\ref{fig:graph_1}, was taken care of in \cite[Theorem 11]{CoutinhoGodsilJulianovanBommel}, with a proof that the only graph which admits perfect state transfer between two vertices separated by a bridge is $P_2$. So we focus on the other case which remained open.

\begin{proposition}\label{prop:bound_trees_2} Let $G$ be a graph. Assume that $i$ and $j$ are strongly cospectral vertices in $G$ with neighbors $i'\neq j$ and $j'\neq i$, respectively, such that $ii'$ and $jj'$ are cut-edges that each separates $i$ and $j$ into different connected components. Then, the minimum distance between elements in $\Phi_i$ is smaller than, or equal to, $\sqrt{2}$, with equality if, and only if, $G$ is isomorphic to $P_3$.
\end{proposition}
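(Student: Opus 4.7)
The plan is to combine the variational inequality (Theorem \ref{thm:eigen_stability_2}) with the tridiagonal (Jacobi) structure afforded by strong cospectrality. By strong cospectrality, the support $\Phi_i$ decomposes as $\Phi_i = \Phi_i^+ \sqcup \Phi_i^-$ via the signs $\sigma_\theta \in \{\pm 1\}$ in $E_\theta\ket{i} = \sigma_\theta E_\theta\ket{j}$. The vectors $\ket{\pm} := (\ket{i} \pm \ket{j})/\sqrt{2}$ generate orthogonal $A(G)$-invariant cyclic subspaces $V_\pm$, on which $A(G)$ acts as tridiagonal matrices $T_\pm$ with simple spectra $\Phi_i^\pm$. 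Because $i$ and $j$ are non-adjacent (a consequence of $i' \neq j$ and $j' \neq i$ combined with cut-edge separation), the initial Jacobi entries satisfy $\alpha_0^+ = \alpha_0^- = 0$ and $\beta_0^{\pm 2} = (\deg i + \deg j)/2 \pm \lvert N(i)\cap N(j)\rvert$, a computation that captures all one-step and two-step data needed from the cut-edge hypothesis.

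The core quantitative input is the operator norm of the perturbation $\Delta := A(G) - A(G-ii'-jj')$. A direct calculation shows $\lVert\Delta\rVert_{op} = 1$ when $i' \neq j'$ (the perturbation splits as two disjoint copies of $A(P_2)$), and $\lVert\Delta\rVert_{op} = \sqrt{2}$ when $i'=j'$ (the perturbation restricted to the three-vertex block $\{i,j,i'\}$ acts as $A(P_3)$). The strategy is then to apply Theorem \ref{thm:eigen_stability_2} to a carefully chosen low-dimensional compression of $A(G)$ and $A(G-ii'-jj')$, chosen inside the cyclic subspace $V_i = V_+\oplus V_-$ so that the resulting interlacing statement concerns exactly the eigenvalues in $\Phi_i$. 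Concretely, in the case $i'=j'$, I would work with a three-dimensional subspace built from $\ket{+}$, $\ket{-}$, and the projection into $V_i$ of $\ket{i'}$ (up to the natural constants coming from $\beta_0^\pm$); the compression of $A(G-ii'-jj')$ to this subspace decouples $\ket{i'}$ from $\ket{\pm}$ while the compression of $A(G)$ equals $A(P_3)$ plus a residual term capturing the remaining neighbors of $i,j$ inside $H,H'$. In the case $i'\neq j'$ an analogous four-dimensional compression yields the strictly smaller bound of $1<\sqrt{2}$.

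The equality case is handled via the sharp clause of Theorem \ref{thm:eigen_stability_2}. If the minimum gap in $\Phi_i$ equals $\sqrt{2}$, there must be a common eigenvector of $A(G)$ and $A(G-ii'-jj')$ inside the span of the extremal eigenspaces of $\Delta$. Since $\lVert\Delta\rVert_{op}=1$ in the subcase $i'\neq j'$, equality forces $i'=j'$. The extremal eigenspaces of $\Delta$ in that subcase are each one-dimensional and generated by $(\ket{i}+\ket{j}) \pm \sqrt{2}\,\ket{i'}$; demanding such a vector be an eigenvector of $A(G)$ forces every other neighbor of $i$ or $j$ to have trivial coefficient, hence $\deg(i)=\deg(j)=1$. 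Combined with the support condition from the cospectrality data, this eliminates any vertex outside $\{i,j,i'\}$ and leaves $G\cong P_3$.

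The step I expect to be most delicate is the translation of the operator-norm bound on $\Delta$ into a bound on consecutive eigenvalues within the subset $\Phi_i \subseteq \Phi(G)$: Theorem \ref{thm:eigen_stability_2} pairs sorted eigenvalues of $A(G)$ and $A(G-ii'-jj')$ at the level of the entire spectrum, while the conclusion concerns only the support of $\ket{i}$. Resolving this requires passing to the cyclic subspace $V_i$, in which $A(G)$ realizes precisely the spectrum $\Phi_i$, and arranging the witnessing compression $W$ to lie inside $V_i$. This is where the explicit cut-edge formula $\phi^G = \phi^H\phi^K - \phi^{H-i}\phi^{K-i'}$ (and its companion through $jj'$), together with $\phi^{G-i} = \phi^{G-j}$ from cospectrality, enters: it forces the Jacobi matrices $T_\pm$ to share enough leading-entry data that the compression built from $\ket{\pm}$ and one extra $V_i$-vector correctly witnesses the $\sqrt{2}$ bound.
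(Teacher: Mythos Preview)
Your proposal has a genuine gap at precisely the step you flag as delicate, and the sketch you offer does not close it. Theorem~\ref{thm:eigen_stability_2} compares the $\ell$-th eigenvalue of one matrix with the $\ell$-th eigenvalue of another; it does not by itself produce two \emph{distinct} elements of $\Phi_i$ that are close. If you take $M = A(G)|_{V_i}$ then indeed the spectrum of $M$ is $\Phi_i$, but the compression $M' = P_{V_i}\, A(G-ii'-jj')\, P_{V_i}\big|_{V_i}$ has no reason to have its eigenvalues inside $\Phi_i$, since $V_i$ is not invariant under $A(G-ii'-jj')$; the cut-edge factorizations you invoke control characteristic polynomials of induced subgraphs, not the spectrum of this particular compression. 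Passing to a $3$- or $4$-dimensional $W \subset V_i$ makes things worse: then even the compression of $A(G)$ to $W$ has eigenvalues that merely interlace $\Phi_i$ rather than belong to it. So from $|\theta_\ell(M) - \theta_\ell(M')| \leq \sqrt{2}$ you cannot extract two elements of $\Phi_i$ within $\sqrt{2}$ of each other, and the equality analysis, which presupposes the inequality, inherits the same defect.

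The paper supplies the missing mechanism by a pigeonhole argument on two auxiliary matrices of \emph{equal} size whose spectra are guaranteed to contain $\Phi_{ij}^+$ and $\Phi_{ij}^-$ as subsets. Writing the partial-fraction expansions of $\alpha_{ij}^+$ and $\alpha_{ij}^-$ over a common pole set $\{r_1,\dots,r_k\}$ yields $(k{+}1)\times(k{+}1)$ matrices $M_{ij}^\pm$ with $\Phi_{ij}^\pm \subseteq \Phi(M_{ij}^\pm)$ and $k \leq |\Phi_{ij}^+| + |\Phi_{ij}^-| - 2$. Since $|\Phi_{ij}^+| + |\Phi_{ij}^-| > k+1$, some index $m$ must satisfy $\theta_m(M_{ij}^+) \in \Phi_{ij}^+$ and $\theta_m(M_{ij}^-) \in \Phi_{ij}^-$ simultaneously; strong cospectrality forces $\Phi_{ij}^+ \cap \Phi_{ij}^- = \varnothing$, so these are distinct elements of $\Phi_i$. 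The bound $\|M_{ij}^+ - M_{ij}^-\|_{op} \leq \sqrt{2}$ then finishes, but it comes not from your $\Delta = A(G)-A(G-ii'-jj')$: it comes from Corollary~\ref{cor:partial_fraction_2}, which bounds the residues of $\alpha_{ij}^+ - \alpha_{ij}^- = -2\sum_P \phi^{G\setminus P}/\phi^{G\setminus\{i,j\}}$ via Cauchy--Schwarz on the eigenprojectors of $G\setminus\{i,j\}$. Your computation of $\|\Delta\|_{op}$ is correct but plays no role in the actual argument.
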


It is immediate to verify that \cite[Theorem 11]{CoutinhoGodsilJulianovanBommel}, Proposition~\ref{prop:bound_trees_2} and Theorem~\ref{thm:bound_PST} imply our main result:

\begin{theorem}\label{thm:main_result} Let $G$ be a graph with more than three vertices.
\begin{enumerate}
	\item Assume that $i$ and $j$ are distinct vertices in $G$ with neighbors $i'$ and $j'$ respectively, such that $ii'$ and $jj'$ are cut-edges. Note that $i'$ and $j'$ are possibly equal to each other, or to $j$ and $i$, respectively.
	\item Assume the removal of $ii'$ and the removal of $jj'$ separate $i$ and $j$ into two connected components.
\end{enumerate}
Then, there is no perfect state transfer between $i$ and $j$.
\end{theorem}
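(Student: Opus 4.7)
The plan is to split on whether $i$ and $j$ are adjacent, mirroring the dichotomy illustrated in Figure~\ref{fig:graph_1}, and then to invoke in each case the three results named immediately before the statement.

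In the subcase $i'=j$ and $j'=i$, so that $ij$ is itself a cut-edge, I will directly cite \cite[Theorem~11]{CoutinhoGodsilJulianovanBommel}, which says that the only graph admitting perfect state transfer across a bridge is $P_2$. Since $G$ has more than three vertices, no perfect state transfer occurs in this subcase, and we are done.

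In the main subcase $i'\neq j$ and $j'\neq i$, I start from the fact that perfect state transfer implies strong cospectrality (Theorem~\ref{thm:pstcha}(a)), placing us in the hypotheses of Proposition~\ref{prop:bound_trees_2}. That proposition bounds the minimum distance between elements of $\Phi_i$ above by $\sqrt{2}$, with equality only for $P_3$; since $|V(G)|>3$, the inequality is strict. I then combine this with Theorem~\ref{thm:pstcha}(b) and (c), together with Theorem~\ref{thm:bound_PST}. Condition~(b) forces any two distinct eigenvalues in $\Phi_i$ to differ by at least $\sqrt{\Delta}$ (via the parity of the $b_r$); Theorem~\ref{thm:bound_PST} gives $g\sqrt{\Delta}\geq\sqrt{2}$, so either $\Delta\geq 2$ or $g\geq 2$; and inspecting the arithmetic of the eigenvalues $\theta_r=(a+b_r\sqrt{\Delta})/2$ under $b_r=b_0-gk_r$ with the parity constraints of (c) is meant to push the minimum gap up to $\sqrt{2}$. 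This collides with the strict inequality coming from Proposition~\ref{prop:bound_trees_2}, leaving no room for perfect state transfer.

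The part I expect to require the most care is the last ``squeeze'': verifying that the combination of the parity of the $b_r$, the integrality of the $k_r$, and $g\sqrt{\Delta}\geq\sqrt{2}$ genuinely forces the minimum eigenvalue gap in $\Phi_i$ to be at least $\sqrt{2}$, rather than merely $1$ — the latter being the naive lower bound one gets from condition~(b) alone. The delicate situation is $\Delta=1$ and $g=2$, where the naive lower bound on the gap is exactly $1<\sqrt{2}$; handling it cleanly may require extracting additional information from condition~(c) (using that some $\sigma_r=-1$ and hence some $k_r$ has prescribed parity), or invoking a structural feature of the cut-edge configuration beyond what is recorded explicitly in the three cited statements.
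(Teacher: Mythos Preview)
Your plan is exactly the paper's: split on the bridge/non-bridge dichotomy, dispose of the bridge case via \cite[Theorem~11]{CoutinhoGodsilJulianovanBommel}, and in the non-bridge case combine strong cospectrality with Proposition~\ref{prop:bound_trees_2} and Theorem~\ref{thm:bound_PST} to reach a contradiction. The paper itself records no further argument beyond ``it is immediate to verify''.

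The one point you flag --- the $\Delta=1$, $g=2$ case --- is a fair worry if you read Theorem~\ref{thm:bound_PST} only through the time bound $t\leq\pi/\sqrt{2}$, since $g\sqrt{\Delta}\geq\sqrt{2}$ by itself gives only gap $\geq g\sqrt{\Delta}/2$, not $\geq\sqrt{2}$. But the actual content of Theorem~14 in~\cite{coutinho2023decorated} (which the paper is paraphrasing, and explicitly calls a ``strengthening'' of the gap~$\geq 1$ observation) is the eigenvalue-gap statement: perfect state transfer forces the minimum distance in $\Phi_i$ to be at least $\sqrt{2}$. Concretely, when $\Delta=1$ the eigenvalues in the support are integers and that result shows they all share the same parity, so the gap is in fact $\geq 2$. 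With that reading of Theorem~\ref{thm:bound_PST} there is no residual case, and the squeeze against the strict inequality from Proposition~\ref{prop:bound_trees_2} (since $|V(G)|>3$) closes immediately --- no extra structural input from the cut-edge configuration is needed.
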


Recall that both $P_2$ and $P_3$ admit perfect state transfer. Since the only connected trees with at most three vertices are $P_2$ and $P_3$ we obtain the following consequence.

\begin{theorem}\label{cor:main_corollary_1} No tree on more than three vertices admits perfect state transfer.
\end{theorem}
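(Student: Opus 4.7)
The plan is to deduce the statement directly from Theorem~\ref{thm:main_result}, using only the fact that every edge of a tree is a cut-edge.

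I would start by letting $T$ be a tree on $n \geq 4$ vertices and assuming, toward a contradiction, that perfect state transfer occurs between two distinct vertices $i$ and $j$ of $T$. Since $T$ is a tree, there is a unique path $P = i\, v_1\, v_2 \cdots v_k\, j$ connecting $i$ to $j$. I would set $i' := v_1$ and $j' := v_k$, so that $i'$ is the neighbor of $i$ along $P$ and $j'$ is the neighbor of $j$ along $P$. The degenerate cases where $P$ has length one (so $i' = j$ and $j' = i$, the single edge $ij$ serving as both cut-edges) or length two (so $i' = j'$ is the unique internal vertex) are explicitly allowed by the statement of Theorem~\ref{thm:main_result}, which permits $i'$ and $j'$ to coincide with each other or with $j$ and $i$ respectively.

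The verification of the hypotheses of that theorem would then be immediate. Every edge of a tree is a cut-edge, so $ii'$ and $jj'$ are cut-edges, establishing condition (1). For condition (2), removing $ii'$ disconnects $T$ into two components; the sub-path $v_1 v_2 \cdots v_k\, j$ of $P$ lies entirely in the component of $T - ii'$ containing $i'$, so that component contains $j$, and hence $i$ and $j$ end up in different components of $T - ii'$. The symmetric argument shows that $T - jj'$ also separates $i$ from $j$. Applying Theorem~\ref{thm:main_result} produces no perfect state transfer between $i$ and $j$, contradicting the assumption, so no such $T$ exists.

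I do not anticipate any real obstacle: the hard mathematical content is packaged inside Theorem~\ref{thm:main_result} (whose two sub-cases are handled by \cite{CoutinhoGodsilJulianovanBommel} and by Proposition~\ref{prop:bound_trees_2}), and the only contribution of this short argument is to observe that trees supply, for every pair of distinct vertices, a canonical pair of cut-edges of the shape required by that theorem. The only point demanding minor care is checking that the degenerate low-length cases of $P$ still fall within the scope of Theorem~\ref{thm:main_result}, which the statement of that theorem explicitly accommodates.
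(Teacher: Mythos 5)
Your argument is correct and is exactly the route the paper takes: Theorem~\ref{thm:main_result} applies to any pair of distinct vertices in a tree on more than three vertices because every tree edge is a cut-edge and the neighbors of $i$ and $j$ along the unique $i$--$j$ path give the required separating edges (including the degenerate adjacent and distance-two cases, which the theorem explicitly allows). The paper merely compresses this verification into the remark that it is immediate, together with the observation that the only trees on at most three vertices are $P_2$ and $P_3$.
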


This answers a question by Godsil~\cite[Question (5)]{GodsilStateTransfer12} and by the first author~\cite[Problem 1]{CoutinhoPhD}, later stated as a conjecture in \cite[Conjecture 1]{CoutinhoLiu2} and in \cite[Conjecture 1]{coutinho2022quantum}. Corollary~\ref{cor:main_corollary_1} generalizes early results in the literature that there is no perfect state transfer on paths on more than three vertices~\cite{ChristandlPSTQuantumSpinNet2}, double stars~\cite[Theorem 4.6]{fan2013pretty} and trees on more than 2 vertices with unique perfect matchings \cite[Theorem 7.2]{CoutinhoLiu2}. By noticing that our argument generally holds for graphs with integer weights, a corollary of our Theorem~\ref{thm:main_result} is the result that trees on more than two vertices do not admit Laplacian perfect state transfer, proved in \cite[Corollary 4.4]{CoutinhoLiu2}.

Theorem~\ref{thm:main_result} also implies that the two main constructions of strongly cospectral pairs presented in the work of Godsil and Smith~\cite[Theorem 9.1 and Lemma 9.3]{godsil2017strongly} do not produce perfect state transfer, except in the trivial cases of $P_2$ and $P_3$. Our result also shows that there is no adjacency (or Laplacian) perfect state transfer between vertices of degree $1$, except in trivial cases.

\section{Strong cospectrality and rational functions}\label{sec:strategy}

To prove Proposition~\ref{prop:bound_trees_2}, we will make use of the machinery presented in this section. We begin with the following well known equality.

\begin{lemma}[Lemma 2.1 in \cite{GodsilAlgebraicCombinatorics}]\label{lem:wronskian} 
Let $i$ and $j$ be vertices in a graph $G$. Then,
\[\phi^{G\setminus i}\phi^{G\setminus j}-\phi^{G\setminus\{i,j\}}\phi^G=\left(\sum_{P:i\to j}  \phi^{G\setminus P}\right)^2.\]
\end{lemma}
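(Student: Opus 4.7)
The plan is to reduce the identity to two classical ingredients: Jacobi's identity for complementary minors (Dodgson condensation) applied to the characteristic matrix $tI - A(G)$, and the combinatorial expansion of its off-diagonal cofactors as a signed sum over paths from $i$ to $j$. Once both are in hand, squaring absorbs the sign and the identity follows.

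Set $M(t) := tI - A(G)$, so that $\phi^G(t) = \det M(t)$, and write $M_{[S,T]}(t)$ for the submatrix obtained by deleting the rows indexed by $S$ and the columns indexed by $T$. Since $A(G \setminus S)$ is the principal submatrix of $A(G)$ on $V(G)\setminus S$, we have $\det M_{[S,S]}(t) = \phi^{G \setminus S}(t)$ for every $S \subseteq V(G)$. Applying the Desnanot--Jacobi identity to $M(t)$ at the index set $\{i,j\}$ gives
\[
\det M \cdot \det M_{[\{i,j\},\{i,j\}]} = \det M_{[i,i]} \cdot \det M_{[j,j]} - \det M_{[i,j]} \cdot \det M_{[j,i]},
\]
and since $M(t)$ is symmetric, $\det M_{[i,j]} = \det M_{[j,i]}$. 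Rearranging,
\[
\phi^{G \setminus i}\, \phi^{G \setminus j} - \phi^{G \setminus \{i,j\}}\, \phi^G = \bigl(\det M_{[i,j]}\bigr)^2.
\]

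It then remains to show $\det M_{[i,j]} = \pm \sum_{P : i \to j} \phi^{G \setminus P}(t)$. I would expand $\det M_{[i,j]}$ by Leibniz over bijections $\sigma \colon V(G)\setminus\{i\} \to V(G)\setminus\{j\}$: each contributing $\sigma$ must have $M_{k,\sigma(k)} \neq 0$ for all $k$, and since $j$ has no preimage under $\sigma$, tracing the orbit $j, \sigma(j), \sigma^2(j), \ldots$ produces a walk in $G$ that terminates at $i$. A sign-reversing involution swapping at the first repeated vertex of this orbit cancels all terms whose walk is not a simple path; the surviving terms are indexed by a path $P$ from $i$ to $j$, and the restriction of $\sigma$ to $V(G)\setminus V(P)$ contributes (by Leibniz again) exactly $\phi^{G\setminus P}(t)$ up to a global sign depending only on $|V(P)|$.

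The main obstacle is the sign bookkeeping in this last step: tracking the Leibniz signs, the $-1$ on each off-diagonal entry of $M$, and the cyclic structure of $\sigma$, to confirm that every path contributes with the same sign and that the involution on non-path terms is genuinely fixed-point-free. This is the content of the path-cofactor identity developed in Chapter~4 of~\cite{GodsilAlgebraicCombinatorics}, which I would cite after the Jacobi reduction rather than rederive.
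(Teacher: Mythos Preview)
The paper does not prove this lemma; it is quoted without proof from Godsil's book. Your outline is essentially the standard argument (and is how the cited reference proceeds): apply Desnanot--Jacobi to $tI-A$ to reduce the identity to $(\det M_{[i,j]})^2$, then identify the off-diagonal minor with the signed path sum.

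One correction to your sketch: in the Leibniz expansion of $\det M_{[i,j]}$, the orbit $j,\sigma(j),\sigma^2(j),\ldots$ is \emph{automatically} a simple path terminating at $i$, because $\sigma$ is injective and $j$ lies outside its image. There is no ``first repeated vertex,'' so the sign-reversing involution you describe has nothing to act on and is unnecessary. The decomposition is cleaner than you suggest: each nonzero Leibniz term factors as a simple path $P$ from $j$ to $i$ in $G$ together with a permutation of $V(G)\setminus V(P)$ supported on fixed points and edges; summing over the latter for fixed $P$ already gives $\det\bigl((tI-A)\vert_{V(G)\setminus V(P)}\bigr)=\phi^{G\setminus P}$ exactly. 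The only genuine work, as you correctly flag, is verifying that the Leibniz sign times the product of the $(-1)$'s along $P$ depends only on $i$ and $j$ (not on the particular path), so that the global $\pm$ can be absorbed by squaring.
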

The sum on the right hand side runs over all paths connecting $i$ to $j$. The previous lemma has the following immediate consequence for cospectral vertices.

\begin{corollary}\label{cor:cristoffel-darboux} Let $i$ and $j$ be cospectral vertices in the weighted graph $G$. Then, 

\[\left(\phi^{G\setminus i}-\sum_{P:i\to j} \phi^{G\setminus P}\right)\left(\phi^{G\setminus i}+\sum_{P:i\to j} \phi^{G\setminus P}\right)=\phi^G\phi^{G\setminus\{i,j\}}.\]
\end{corollary}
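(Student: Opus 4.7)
The plan is essentially to combine Lemma~\ref{lem:wronskian} with the hypothesis that $i$ and $j$ are cospectral, and then recognize a difference of squares. The argument should be short, since the algebraic content is entirely contained in the Wronskian-type identity of Lemma~\ref{lem:wronskian}.

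First, I would invoke the definition of cospectral vertices: $i$ and $j$ being cospectral means $\phi^{G\setminus i}=\phi^{G\setminus j}$. Substituting this into Lemma~\ref{lem:wronskian} turns the left-hand side into $(\phi^{G\setminus i})^2 - \phi^{G\setminus\{i,j\}}\phi^G$, so that
\[
(\phi^{G\setminus i})^2 - \left(\sum_{P:i\to j}\phi^{G\setminus P}\right)^2 = \phi^{G}\phi^{G\setminus\{i,j\}}.
\]

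Next, I would factor the left-hand side using the standard difference of squares identity $a^2-b^2=(a-b)(a+b)$ with $a=\phi^{G\setminus i}$ and $b=\sum_{P:i\to j}\phi^{G\setminus P}$. This yields exactly the claimed factorization, and no further work is needed.

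There is no real obstacle here; the only subtlety is just recalling that the definition of cospectral used earlier in the paper is indeed $\phi^{G\setminus i}=\phi^{G\setminus j}$ (not just having the same spectrum as graphs), and that Lemma~\ref{lem:wronskian} applies verbatim to weighted graphs, since path-tree style Wronskian identities are insensitive to whether the edges carry integer or real weights. With those pointed out, the corollary is a two-line deduction.
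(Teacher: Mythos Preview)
Your proposal is correct and follows exactly the same route as the paper: use cospectrality to replace $\phi^{G\setminus j}$ by $\phi^{G\setminus i}$ in Lemma~\ref{lem:wronskian}, then factor the resulting difference of squares. The paper's proof is just as short.
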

\begin{proof} Since $i$ and $j$ are cospectral, we have $\phi^{G\setminus i}=\phi^{G\setminus j}$. To finish the proof we simply apply Lemma~\ref{lem:wronskian} and rearrange the terms.
\end{proof}

We can also obtain explicit formulae for the entries of the eigenprojectors. See for instance \cite[Section 2.3]{CoutinhoGodsilJulianovanBommel} or \cite[Chapter 4]{GodsilAlgebraicCombinatorics}.

\begin{theorem}\label{thm:neutrino} Let $i$ and $j$ be vertices in a graph $G$. Then,
\begin{enumerate}[(a)]
    \item $\dfrac{\phi^{G\setminus i}}{\phi^G}(t) = \displaystyle\sum_{\theta \in \Phi(G)}\dfrac{(E_\theta)_{i,i}}{t-\theta};$
    \item $\dfrac{\sum_{P:i\to j} \phi^{G\setminus P}}{\phi^G}(t)=\displaystyle\sum_{\theta \in \Phi(G)}\dfrac{(E_\theta)_{i,j}}{t-\theta}.$
\end{enumerate} 
\end{theorem}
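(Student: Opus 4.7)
The plan is to compute the matrix-valued rational function $(tI-A)^{-1}$ in two different ways, and then read off its $(i,i)$ and $(i,j)$ entries.

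First, I would use the spectral decomposition $A = \sum_{\theta \in \Phi(G)} \theta\, E_\theta$ together with the fact that the $E_\theta$ are mutually orthogonal idempotents summing to $I$, which gives the clean identity
\[
(tI - A)^{-1} \;=\; \sum_{\theta \in \Phi(G)} \frac{1}{t-\theta}\, E_\theta.
\]
Reading off the $(i,i)$ and $(i,j)$ entries of this matrix identity immediately produces the right-hand sides of (a) and (b).

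Second, Cramer's rule (or the adjugate formula) expresses
\[
\bigl[(tI-A)^{-1}\bigr]_{i,j} \;=\; \frac{(-1)^{i+j}\det\bigl((tI-A)_{\hat{j},\hat{i}}\bigr)}{\det(tI-A)} \;=\; \frac{(-1)^{i+j}\det\bigl((tI-A)_{\hat{j},\hat{i}}\bigr)}{\phi^G(t)},
\]
where $(tI-A)_{\hat{j},\hat{i}}$ denotes the minor obtained by deleting the row indexed by $j$ and the column indexed by $i$. When $i=j$, this minor is exactly the characteristic polynomial of the principal submatrix on $V(G)\setminus\{i\}$, namely $\phi^{G \setminus i}(t)$, and part (a) follows at once. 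For part (b) it only remains to establish the combinatorial identity
\[
(-1)^{i+j}\det\bigl((tI-A)_{\hat{j},\hat{i}}\bigr) \;=\; \sum_{P:i\to j}\phi^{G\setminus P}(t),
\]
which, combined with the two expressions for $[(tI-A)^{-1}]_{i,j}$, yields the stated formula.

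To prove this last identity, I would expand the determinant as a signed sum over bijections $\sigma: V(G)\setminus\{i\} \to V(G)\setminus\{j\}$. Each such $\sigma$ decomposes uniquely as a path $P = (i=v_0, v_1,\dots, v_k = j)$ in $G$, obtained by iterating $i \mapsto \sigma^{-1}(i)\mapsto \cdots$, together with a permutation $\tau$ of $V(G)\setminus V(P)$. The $\tau$-part contributes, via the usual permutation expansion of a characteristic polynomial, exactly $\phi^{G\setminus P}(t)$, and a short sign computation shows that the $k$ factors of $-1$ from the off-diagonal entries of $tI-A$ along the edges of $P$, the $(-1)^{i+j}$ from the cofactor, and the signature of $\sigma$ combine into $+1$. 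Summing over all paths $P$ from $i$ to $j$ then gives the claimed identity.

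The main obstacle is the sign bookkeeping in this last step; the cleanest route is to follow the classical cofactor-as-signed-paths argument, which appears essentially verbatim in Chapter~4 of~\cite{GodsilAlgebraicCombinatorics} and Section~2.3 of~\cite{CoutinhoGodsilJulianovanBommel}, and so both parts (a) and (b) can be presented as immediate corollaries of these standard resolvent identities.
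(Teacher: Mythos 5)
Your proposal is correct: computing the resolvent $(tI-A)^{-1}$ once via the spectral decomposition and once via the adjugate, and identifying the $(j,i)$ cofactor of $tI-A$ with $\sum_{P:i\to j}\phi^{G\setminus P}$, is exactly the standard argument the paper relies on — it gives no proof of its own and simply cites Chapter~4 of Godsil's book and Section~2.3 of \cite{CoutinhoGodsilJulianovanBommel}, where this resolvent/cofactor derivation appears. So your route coincides with the paper's intended one, and the only part you defer (the sign bookkeeping in the path decomposition of the cofactor) is precisely what those references carry out.
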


Note that $\theta$ is in $\Phi_i$ if and only if $(E_\theta)_{i,i}>0$. In other words $\Phi_i$ is precisely the set of poles of $\phi^{G\setminus i}/ \phi^G$. When two vertices $i$ and $j$ are strongly cospectral, we define $\Phi_{ij}^{+}$ to be the set of eigenvalues $\theta$ for which $E_\theta \ket i = E_\theta \ket j \neq 0$, and $\Phi_{ij}^{-}$ to be the set of eigenvalues $\theta$ for which $E_\theta \ket i = - E_\theta \ket j \neq 0$.

\begin{proposition}\label{prop:pos_neg_spectrum} Let $G$ be a graph and $i$ and $j$ be strongly cospectral vertices in $G$. Then, $\Phi_{ij}^{+}$ and $\Phi_{ij}^{-}$ are respectively the set of poles of 
\[\dfrac{\phi^{G\setminus i}+\sum_{P:i\to j} \phi^{G\setminus P}}{\phi^G} \quad \text{and} \quad \dfrac{\phi^{G\setminus i}-\sum_{P:i\to j} \phi^{G\setminus P}}{\phi^G}. \]
\end{proposition}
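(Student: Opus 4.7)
The plan is to expand both rational functions as partial fractions over $\Phi(G)$ using Theorem~\ref{thm:neutrino}, and then identify which residues survive by using that, for strongly cospectral $i$ and $j$, the vectors $E_\theta\ket i$ and $E_\theta\ket j$ are either both zero or differ by a fixed sign $\sigma_\theta\in\{+1,-1\}$. Adding (respectively subtracting) parts (a) and (b) of Theorem~\ref{thm:neutrino} gives
\[
\dfrac{\phi^{G\setminus i}\pm\sum_{P:i\to j}\phi^{G\setminus P}}{\phi^G}(t)=\sum_{\theta\in\Phi(G)}\dfrac{(E_\theta)_{ii}\pm(E_\theta)_{ij}}{t-\theta}.
\]
Since the poles in this partial-fraction expansion are simple and located at distinct points (one per element of $\Phi(G)$), no cancellation between different eigenvalues can occur, so the poles of the left-hand rational function are precisely those $\theta$ for which $(E_\theta)_{ii}\pm(E_\theta)_{ij}\neq 0$.

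The next step is to compute these residues. Because $E_\theta$ is a symmetric projector, $(E_\theta)_{ii}=\lVert E_\theta\ket i\rVert^2$ and $(E_\theta)_{ij}=\bra i E_\theta^2\ket j=(E_\theta\ket i)^{\top}(E_\theta\ket j)$. Strong cospectrality gives $E_\theta\ket j=\sigma_\theta E_\theta\ket i$ whenever $E_\theta\ket i\neq 0$, hence
\[
(E_\theta)_{ii}+(E_\theta)_{ij}=(1+\sigma_\theta)\lVert E_\theta\ket i\rVert^2,\qquad (E_\theta)_{ii}-(E_\theta)_{ij}=(1-\sigma_\theta)\lVert E_\theta\ket i\rVert^2.
\]
The first quantity is nonzero iff $\sigma_\theta=+1$ and $E_\theta\ket i\neq 0$, i.e.\ iff $\theta\in\Phi_{ij}^+$; symmetrically, the second is nonzero iff $\theta\in\Phi_{ij}^-$. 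Combining these observations with the previous paragraph yields the claim. I do not expect a substantive obstacle: the statement is essentially a bookkeeping consequence of Theorem~\ref{thm:neutrino} and the definition of $\Phi_{ij}^\pm$; the only point worth flagging is the justification that the partial fractions are simple and disjoint, so residues cannot cancel across different eigenvalues when the two rational functions are added.
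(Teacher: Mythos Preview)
Your proof is correct and follows essentially the same approach as the paper's: both use Theorem~\ref{thm:neutrino} to write the rational functions as partial fractions with residues $(E_\theta)_{ii}\pm(E_\theta)_{ij}$, then invoke strong cospectrality to reduce these to $(1\pm\sigma_\theta)(E_\theta)_{ii}$ and read off which eigenvalues survive. Your version is slightly more explicit in justifying that the simple-pole expansion has no cross-cancellation and in deriving $(E_\theta)_{ij}=\sigma_\theta\lVert E_\theta\ket i\rVert^2$ via $E_\theta^2=E_\theta$, but the substance is identical.
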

\begin{proof} By Theorem~\ref{thm:neutrino} we have that 
\[\dfrac{\phi^{G\setminus i}+\sum_{P:i\to j} \phi^{G\setminus P}}{\phi^G}(t)=\sum_{\theta \in \Phi_i}\dfrac{(E_\theta)_{i,i}+(E_\theta)_{i,j}}{t-\theta} = \sum_{\theta \in \Phi_{ij}^{+}}\dfrac{2(E_\theta)_{i,i}}{t-\theta}.\]
Thus $\theta$ is a pole of this rational function if and only if $\theta$ is in $\Phi_{ij}^{+}$. The same reasoning applies to $\Phi_{ij}^{-}$.
\end{proof}

It turns out that it is useful to take the inverse of these functions and treat $\Phi_{ij}^{+}$ and $\Phi_{ij}^{-}$ as the zeros of two rational functions.

Inspired by the notation in \cite{coutinho2022strong}, we define

\[\alpha_{ij}^{+}=\dfrac{\phi^{G\setminus i}-\sum_{P:i\to j} \phi^{G\setminus P}}{\phi^{G\setminus \{i,j\}}}, \quad \alpha_{ij}^{-}=\dfrac{\phi^{G\setminus i}+\sum_{P:i\to j} \phi^{G\setminus P}}{\phi^{G\setminus \{i,j\}}}.\] 

\begin{lemma}\label{thm:alphas_characterization} Let $i$ and $j$ be strongly cospectral vertices in a graph $G$. Then, $\Phi_{ij}^{+}$ and $\Phi_{ij}^{-}$ are precisely the set of zeros of $\alpha_{ij}^{+}$ and $\alpha_{ij}^{-}$, respectively. Furthermore, 

\[\alpha_{ij}^{+}(t) = t-r_0^+-\displaystyle\sum_{\ell=1}^{|\Phi_{ij}^{+}|-1}\dfrac{\lambda_\ell^+}{t-r_\ell^+},\quad \alpha_{ij}^{-}(t) = t-r_0^--\displaystyle\sum_{\ell=1}^{|\Phi_{ij}^{-}|-1}\dfrac{\lambda_\ell^-}{t-r_\ell^-},\]

\noindent where $\lambda_\ell^+, \lambda_\ell^-> 0$ for every $\ell$.
\end{lemma}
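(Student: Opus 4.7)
The plan is to combine the Christoffel-Darboux-like identity in Corollary~\ref{cor:cristoffel-darboux} with the partial fraction decomposition of Theorem~\ref{thm:neutrino}, then invoke Lemmas~\ref{lem:interlacing_1} and~\ref{lem:interlacing_2}. Write $N^{\pm} := \phi^{G\setminus i} \pm \sum_{P:i\to j}\phi^{G\setminus P}$ so that the definitions read $\alpha_{ij}^{+} = N^{-}/\phi^{G\setminus\{i,j\}}$ and $\alpha_{ij}^{-} = N^{+}/\phi^{G\setminus\{i,j\}}$. From Corollary~\ref{cor:cristoffel-darboux} we have $N^{+}N^{-} = \phi^{G}\,\phi^{G\setminus\{i,j\}}$, and dividing through yields the equivalent, more useful expressions
\[
\alpha_{ij}^{+} \;=\; \frac{\phi^G}{N^{+}}, \qquad \alpha_{ij}^{-} \;=\; \frac{\phi^G}{N^{-}}.
\]

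By Theorem~\ref{thm:neutrino} and the computation inside the proof of Proposition~\ref{prop:pos_neg_spectrum},
\[
\frac{N^{+}}{\phi^G}(t) \;=\; \sum_{\theta \in \Phi_{ij}^{+}} \frac{2(E_\theta)_{ii}}{t-\theta},
\]
with every coefficient $2(E_\theta)_{ii}$ strictly positive. Putting this over the common denominator $p_1(t) := \prod_{\theta\in\Phi_{ij}^{+}}(t-\theta)$, I rewrite $N^{+}/\phi^G = p_2/p_1$ in lowest terms, where $p_1$ is monic of degree $k := |\Phi_{ij}^{+}|$ and $p_2$ has degree $k-1$. The sufficiency direction of Lemma~\ref{lem:interlacing_1} (applied with $p = p_1$, $q = p_2$ and all residues $\lambda_\ell > 0$) then shows that $p_1$ and $p_2$ \emph{strictly} interlace. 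Taking reciprocals gives $\alpha_{ij}^{+} = p_1/p_2$ in lowest terms, so its zeros are exactly $\Phi_{ij}^{+}$; and Lemma~\ref{lem:interlacing_2} applied to $p_1/p_2$ produces the claimed partial fraction decomposition with $\lambda_\ell^{+} > 0$. The argument for $\alpha_{ij}^{-}$ is identical after swapping the $+$ and $-$ signs.

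The one piece of bookkeeping I expect to require some care is matching degrees and leading coefficients, so that Lemma~\ref{lem:interlacing_2} applies verbatim and outputs the leading term $t$ rather than some $ct$. Because any path from $i$ to $j$ passes through at least the two vertices $i$ and $j$, the polynomial $\sum_{P:i\to j}\phi^{G\setminus P}$ has degree at most $n-2$, so $N^{\pm}$ inherits the monic leading term of $\phi^{G\setminus i}$ and is monic of degree $n-1$. Combined with $\phi^{G}$ being monic of degree $n$, this gives the asymptotics $\alpha_{ij}^{\pm} = \phi^{G}/N^{\pm} \sim t$ at infinity, which forces $p_2$ to be monic and so aligns precisely with the form required by Lemma~\ref{lem:interlacing_2}.
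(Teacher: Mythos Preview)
Your proof is correct and follows essentially the same route as the paper: rewrite $\alpha_{ij}^{\pm}$ as $\phi^G/N^{\pm}$ via Corollary~\ref{cor:cristoffel-darboux}, use the partial fraction identity from Proposition~\ref{prop:pos_neg_spectrum} to identify the zeros and obtain strict interlacing through Lemma~\ref{lem:interlacing_1}, and then apply Lemma~\ref{lem:interlacing_2} to get the displayed form. Your added bookkeeping about degrees and the monic leading term is a welcome clarification that the paper leaves implicit.
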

\begin{proof} Observe that, by Corollary~\ref{cor:cristoffel-darboux},

\[\alpha_{ij}^{+}=\dfrac{\phi^{G\setminus i}-\sum_{P:i\to j} \phi^{G\setminus P}}{\phi^{G\setminus \{i,j\}}} = \dfrac{\phi^G}{\phi^{G\setminus i}+\sum_{P:i\to j} \phi^{G\setminus P}}.\]

This last function is precisely the inverse of the function that 
appears in Proposition~\ref{prop:pos_neg_spectrum} whose poles are the elements of $\Phi_{ij}^{+}$. Therefore we conclude that $\Phi_{ij}^{+}$ is precisely the set of zeros of $\alpha_{ij}^{+}$.

We also know, from the proof of Proposition~\ref{prop:pos_neg_spectrum}, that

\[\dfrac{\phi^{G\setminus i}+\sum_{P:i\to j} \phi^{G\setminus P}}{\phi^G} = \sum_{\theta \in \Phi_{ij}^{+}}\dfrac{2(E_\theta)_{i,i}}{t-\theta}, \]

\noindent where $2(E_\theta)_{i,i}>0$ for every $\theta$ in $\Phi_{ij}^{+}$. Thus by Lemmas~\ref{lem:interlacing_1} and~\ref{lem:interlacing_2} we obtain that 
\[\alpha_{ij}^{+}(t) = t-r_0^+-\displaystyle\sum_{\ell=1}^{|\Phi_{ij}^{+}|-1}\dfrac{\lambda_\ell^+}{t-r_\ell^+}, \] 
where $\lambda_\ell^+> 0$, for every $\ell=1,\dots ,|\Phi_{ij}^{+}|-1$. 

The same reasoning applies to $\alpha_{ij}^{-}$.
\end{proof}

Notice that the difference between 
\begin{equation} \label{eq:difalpha}
	\alpha_{ij}^{+} - \alpha_{ij}^{-} = -2\dfrac{\sum_{P:i\to j}\phi^{G\setminus P}}{\phi^{G\setminus \{i,j\}}}.
\end{equation}

We will heavily exploit this fact below in our main result in this section. 

Let $i$ and $j$ be strongly cospectral vertices in a graph $G$. Assume that every path from $i$ to $j$ goes through the neighbors $i_1,\dots, i_u$ and $j_1,\dots, j_v$ of $i$ and $j$, respectively. Notice that the sets $\{i_1, \dots, i_u\}$ and $\{j_1, \dots, j_v\}$ may intersect. In Figure~\ref{fig:graph_2} we illustrate this situation.

\begin{figure}[H]
    \centering
    \includegraphics[width=12cm]{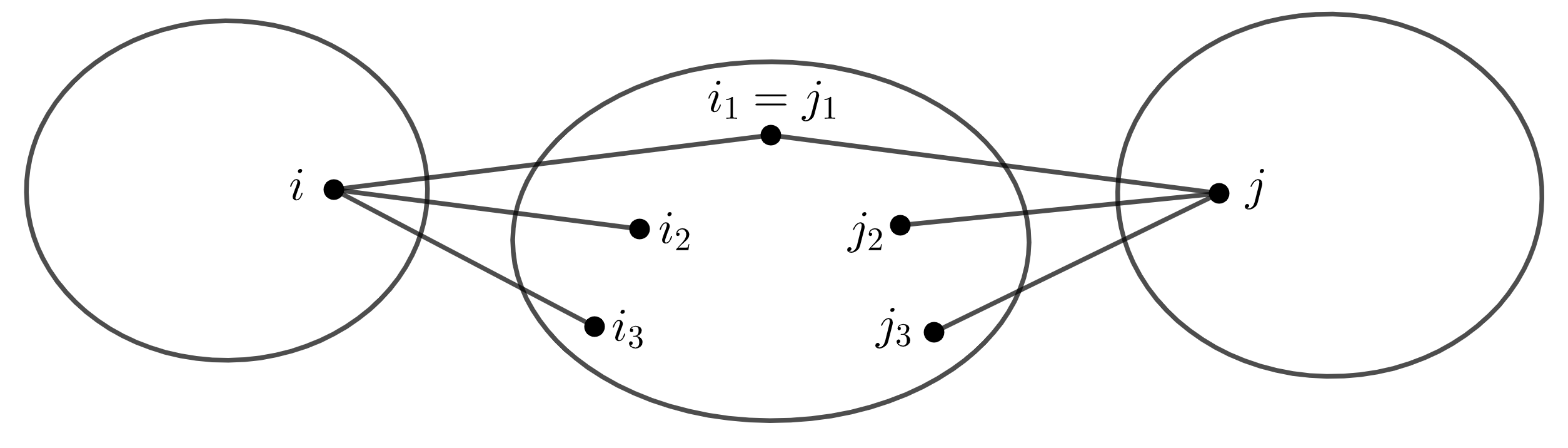}
    \caption{Observe that every path from $i$ to $j$ goes through the neighbors $i_1,i_2,i_3$ and $j_1,j_2,j_3$.}
    \label{fig:graph_2}
\end{figure}

\begin{lemma}\label{prop:partial_fraction} Let $i$ and $j$ be vertices in the graph $G$. Assume that every path from $i$ to $j$ goes through the neighbors $i_1,\dots, i_u$ and $j_1,\dots, j_v$ of $i$ and $j$, respectively. Then, 
\[\dfrac{\sum_{P:i\to j} \phi^{G\setminus P}}{\phi^{G\setminus \{i,j\}}}(t)=\displaystyle\sum_{\ell=1}^k\dfrac{\lambda_\ell}{t-s_\ell} , \] 
where $s_\ell, \lambda_\ell \in \mathds{R}$ for every $\ell$ and 
\[\displaystyle\sum_{\ell=1}^k|\lambda_\ell|\leq \sqrt{uv}.\]
\end{lemma}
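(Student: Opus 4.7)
The plan is to identify the left-hand side with an off-diagonal bilinear form in the resolvent of $D := A(G\setminus\{i,j\})$, and then estimate the sum of its residues via two applications of Cauchy--Schwarz. I will focus on the case where $ij$ is not an edge, which is what is needed for Proposition~\ref{prop:bound_trees_2}.

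First, I would let $c^{(i)}, c^{(j)} \in \mathds{R}^{V(G)\setminus\{i,j\}}$ denote the indicator vectors of $\{i_1,\dots,i_u\}$ and $\{j_1,\dots,j_v\}$ respectively, and establish the identity
\[
\frac{\sum_{P:i\to j}\phi^{G\setminus P}(t)}{\phi^{G\setminus\{i,j\}}(t)} \;=\; c^{(i)T}(tI-D)^{-1}c^{(j)}.
\]
By Theorem~\ref{thm:neutrino}(b) applied to $G\setminus\{i,j\}$, each resolvent entry $((tI-D)^{-1})_{\alpha,\beta}$ is itself a path-sum generating function divided by $\phi^{G\setminus\{i,j\}}$. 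Moreover, every path $P\colon i\to j$ in $G$ decomposes uniquely as $i,\alpha,\dots,\beta,j$, where $\alpha\in N(i)$ and $\beta\in N(j)$, and the restriction $P' \colon \alpha\to\beta$ is a path in $G\setminus\{i,j\}$ with $\phi^{G\setminus P}=\phi^{(G\setminus\{i,j\})\setminus P'}$. The hypothesis that every $i$-to-$j$ path goes through some $i_\alpha$ and some $j_\beta$ ensures that contributions from any ``non-gateway'' neighbors of $i$ or $j$ vanish: for such $\alpha$, no path to any neighbor of $j$ exists in $G\setminus\{i,j\}$, so the corresponding resolvent entry is zero. Hence restricting to the chosen indicator vectors is harmless.

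Next, using the spectral decomposition $D = \sum_\ell s_\ell F_\ell$ (with $F_\ell$ the orthogonal projector onto the $s_\ell$-eigenspace), I would expand
\[
c^{(i)T}(tI-D)^{-1}c^{(j)} \;=\; \sum_\ell \frac{c^{(i)T}F_\ell c^{(j)}}{t-s_\ell} \;=\; \sum_\ell \frac{\lambda_\ell}{t-s_\ell},
\]
which immediately delivers the desired partial-fraction form with real poles $s_\ell$ and real residues $\lambda_\ell = \langle F_\ell c^{(i)}, F_\ell c^{(j)}\rangle$ (using $F_\ell^2 = F_\ell = F_\ell^T$).

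The bound $\sum |\lambda_\ell|\leq \sqrt{uv}$ then follows from two applications of Cauchy--Schwarz:
\[
\sum_\ell |\lambda_\ell| \;\leq\; \sum_\ell \lVert F_\ell c^{(i)}\rVert\,\lVert F_\ell c^{(j)}\rVert \;\leq\; \Bigl(\sum_\ell \lVert F_\ell c^{(i)}\rVert^2\Bigr)^{\!1/2}\Bigl(\sum_\ell \lVert F_\ell c^{(j)}\rVert^2\Bigr)^{\!1/2} \;=\; \lVert c^{(i)}\rVert\,\lVert c^{(j)}\rVert \;=\; \sqrt{uv},
\]
where the last equality uses that the $F_\ell$ are pairwise orthogonal projectors summing to the identity, so $\sum_\ell \lVert F_\ell v\rVert^2 = \lVert v\rVert^2$ for any $v$. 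I expect the first step to be the only delicate one: correctly identifying the bilinear form, verifying the combinatorial decomposition of paths, and confirming that the ``gateway'' hypothesis on the chosen neighbor subsets is exactly what one needs. Once this identification is in place, the partial-fraction expansion is automatic and the Cauchy--Schwarz estimate is routine.
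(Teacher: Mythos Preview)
Your proposal is correct and follows essentially the same route as the paper: identify the left side with the bilinear form $\bra{\chi_i}(tI-A(G\setminus\{i,j\}))^{-1}\ket{\chi_j}$ via Theorem~\ref{thm:neutrino}(b) and the bijection between $i$--$j$ paths and $i_k$--$j_\ell$ paths in $G\setminus\{i,j\}$, expand spectrally, and bound the residues by the same double Cauchy--Schwarz. The only cosmetic difference is that the paper first splits $G\setminus\{i,j\}=H'\sqcup H$ and cancels $\phi^H$, whereas you work directly with the full resolvent; since your indicator vectors are supported on $H'$, the two computations coincide.
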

\begin{proof}  

Let $H'$ be the graph formed by the connected components of $G\setminus \{i,j\}$ containing $i_1,\dots,i_u$ and $j_1,\dots, j_v$. Let $H$ be a graph formed by the remaining connected components of $G\setminus \{i,j\}$. Thus $G\setminus \{i,j\} = H'\sqcup H$.

Observe that there is a correspondence between paths from $i$ to $j$ in $G$ and paths from $\{i_1,\dots, i_u\}$ to $\{j_1,\dots, j_v\}$ in $H'$. For a path $P$ from $i$ to $j$ we write $P'$ for the corresponding path from $\{i_1,\dots, i_u\}$ to $\{j_1,\dots, j_v\}$ in $H'$, and vice-versa. Also, notice that $\phi^{G\setminus \{i,j\}}=\phi^{H'}\cdot \phi^H$, and that, for every path $P$ from $i$ to $j$, it holds $\phi^{G\setminus P} = \phi^{H'\setminus P'} \cdot \phi^H$. Thus, we obtain, 

\begin{align*}
	\dfrac{\sum_{P:i\to j}\phi^{G\setminus P}}{\phi^{G\setminus \{i,j\}}} & = \dfrac{\sum_{k=1}^u\sum_{\ell=1}^v\sum_{P':i_k\to j_\ell}\phi^{H'\setminus P'}\cdot \phi^H}{\phi^{G\setminus \{i,j\}}} 
	\\ & = \displaystyle\sum_{k=1}^u\displaystyle\sum_{\ell=1}^v \frac{\sum_{P':i_k\to j_\ell} \phi^{H'\setminus P'}}{\phi^{H'}}.
\end{align*}

By Theorem~\ref{thm:neutrino} we know that, for every $k$ and $l$,

\[\dfrac{\sum_{P':i_k\to j_\ell}\phi^{H'\setminus P'}}{\phi^{H'}} = \sum_{\theta \in \Phi(H')}\dfrac{(E_\theta)_{i_k,j_\ell}}{t-\theta},\]

\noindent where $E_\theta$ is the eigenprojector of $\theta$ for the graph $H'$. Write $\ket{\chi_i}$ for the vector indexed by the vertices of $H'$ which is the characteristic vector of the neighbors of $i$, and analogously consider the vector $\ket{\chi_j}$. It follows that, 

\[ \dfrac{\sum_{P:i\to j}\phi^{G\setminus P}}{\phi^{G\setminus \{i,j\}}}= \sum_{\theta \in \Phi(H')}\frac{\sum_{k=1}^u\sum_{\ell=1}^v (E_\theta)_{i_k,j_\ell}}{t-\theta}=\sum_{\theta \in \Phi(H')}\dfrac{\bra{\chi_i} E_\theta \ket{\chi_j}}{t-\theta}.\]

Finally, two applications of Cauchy-Schwarz and the fact that $\sum E_\theta = I$ give

\begin{align*}
	\sum_{\theta \in \Phi(H')}|\bra{\chi_i} E_\theta \ket{\chi_j}| & \leq \sum_{\theta \in \Phi(H')} \lVert E_\theta \ket{\chi_i} \rVert \cdot \lVert E_\theta \ket{\chi_j} \rVert \\ & \leq \sqrt{\sum_{\theta \in \Phi(H')}\lVert E_\theta \ket{\chi_i} \rVert^2}\sqrt{\sum_{\theta \in \Phi(H')}\lVert E_\theta \ket{\chi_j} \rVert^2} \\ & = \lVert \ket{\chi_i} \rVert \cdot \lVert \ket{\chi_j} \rVert \\ & = \sqrt{uv}.
\end{align*}
\end{proof}

We obtain an immediate corollary.

\begin{corollary}\label{cor:partial_fraction_2} Let $G$ be a graph and $i$ and $j$ be vertices in $G$ with neighbors $i'\neq j$ and $j'\neq i$, respectively, such that $ii'$ and $jj'$ are cut-edges that each separates $i$ and $j$ into different connected components. Then, 
\[\dfrac{\sum_{P:i\to j}\phi^{G\setminus P}}{\phi^{G\setminus \{i,j\}}}(t) = \displaystyle\sum_{\ell=1}^k\dfrac{\lambda_\ell}{t-s_\ell},\] where $s_\ell, \lambda_\ell \in \mathds{R}$ for every $\ell$ and $\displaystyle\sum_{\ell=1}^k|\lambda_\ell|\leq 1$.
\end{corollary}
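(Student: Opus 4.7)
The plan is to observe that this corollary is essentially the specialization of Lemma~\ref{prop:partial_fraction} to the case where both $u$ and $v$ equal $1$, and the work reduces to verifying that the cut-edge hypothesis produces exactly this situation.

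First I would identify the neighbors of $i$ and $j$ that every $i$-$j$ path must use. Since $ii'$ is a cut-edge whose removal places $i$ and $j$ into different connected components, every path from $i$ to $j$ in $G$ must traverse the edge $ii'$; in particular, the very first vertex of such a path after $i$ is forced to be $i'$. Hence, in the language of Lemma~\ref{prop:partial_fraction}, the set of neighbors of $i$ through which every $i$-$j$ path must pass consists of the single vertex $i'$, so $u=1$. The symmetric argument applied to the cut-edge $jj'$ gives $v=1$, with the unique neighbor being $j'$.

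Having verified the hypotheses of Lemma~\ref{prop:partial_fraction} with $u=v=1$, I would then apply the lemma directly to conclude
\[
\dfrac{\sum_{P:i\to j}\phi^{G\setminus P}}{\phi^{G\setminus \{i,j\}}}(t) = \sum_{\ell=1}^{k}\dfrac{\lambda_\ell}{t-s_\ell},
\]
with $\sum_{\ell=1}^{k}|\lambda_\ell|\leq \sqrt{uv}=\sqrt{1\cdot 1}=1$, which is precisely the statement.

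There is no real obstacle here: the only thing to be careful about is the logical reading of the hypothesis of Lemma~\ref{prop:partial_fraction}, namely that $i_1,\dots,i_u$ is exactly the collection of neighbors of $i$ used as the first vertex by some $i$-$j$ path, and the fact that the cut-edge assumption forces this collection to be the singleton $\{i'\}$. The bound $\sqrt{uv}\leq 1$ then follows immediately, so no additional Cauchy-Schwarz type estimate is required beyond what is already packaged inside Lemma~\ref{prop:partial_fraction}.
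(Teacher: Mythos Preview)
Your proposal is correct and matches the paper's own treatment: the corollary is stated as an immediate consequence of Lemma~\ref{prop:partial_fraction}, and you have correctly identified that the cut-edge hypotheses force $u=v=1$, giving $\sqrt{uv}=1$.
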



\section{Proof of main result}\label{sec:strong_cospectrality}

Consider a rational function of the form 
\[\alpha(t)=t-s_0-\sum_{\ell=1}^k\dfrac{\lambda_\ell}{t-s_\ell}\]
where $\lambda_\ell\geq 0$ for every $\ell$. Observe that every zero of this rational function is a zero of 
\[\prod_{\ell=0}^k(t-s_\ell)-\sum_{\ell=1}^k\lambda_\ell\prod_{m \neq \ell}(t-s_m).\] 
This expression is the characteristic polynomial of the following matrix,

\[M_\alpha:=\left[ {\begin{array}{cccccccc}
	s_0  & \sqrt{\lambda_1} & \sqrt{\lambda_2} & \cdots  & \sqrt{\lambda_k}  \\
    \sqrt{\lambda_1} & s_1 & 0 & \cdots  & 0  \\
    \sqrt{\lambda_2} & 0 & s_2 & \cdots  & 0  \\
	\vdots   & \vdots   & \vdots   &  \ddots  &  \vdots  \\ 
	\sqrt{\lambda_k}   & 0 & 0 & \cdots  & s_k \\ 
	\end{array} } \right].\]

Recall the statement of Lemma~\ref{thm:alphas_characterization}. Let

\begin{equation} \label{eq:k}
	\{r_1,\dots, r_k\} = \{r_1^+,\dots, r_{|\Phi_{ij}^+|-1}^+\}\cup\{r_1^-,\dots, r_{|\Phi_{ij}^-|-1}^-\}.
\end{equation}

Observe that $k$ is smaller than, or equal to, $|\Phi_{ij}^+|+|\Phi_{ij}^-|-2$. We can write
\begin{equation} \label{eq:alphask}
	\alpha_{ij}^{+}(t) = t-r_0^+-\displaystyle\sum_{\ell=1}^{k}\dfrac{\mu_\ell^+}{t-r_\ell},\quad \alpha_{ij}^{-}(t) = t-r_0^--\displaystyle\sum_{\ell=1}^{k}\dfrac{\mu_\ell^-}{t-r_\ell},
\end{equation}
where $\mu_\ell^+, \mu_\ell^-\geq 0$ for all $\ell=1,\dots, k$, and either $\mu_\ell^+>0$ or $\mu_\ell^->0$ for every $\ell$. By our previous discussion we can consider the $(k+1)\times (k+1)$ matrices $M_{\alpha_{ij}^{+}}$ and $M_{\alpha_{ij}^{-}}$ associated with $\alpha_{ij}^{+}$ and $\alpha_{ij}^{-}$, respectively. To ease the notation we will refer to these matrices by $M_{ij}^{+}$ and $M_{ij}^{-}$.

From the $k+1$ eigenvalues of $M_{ij}^{+}$ and $M_{ij}^{-}$ there are exactly $|\Phi_{ij}^+|$ and $|\Phi_{ij}^-|$ zeros of $\alpha_{ij}^{+}$ and $\alpha_{ij}^{-}$, respectively.

We can translate the content of Corollary~\ref{cor:partial_fraction_2} to properties of the difference of the matrices $M_{ij}^{+}$ and $M_{ij}^{-}$. This is established in our next result.

\begin{lemma}\label{lem:matrix_difference_2} Let $G$ be a graph. Assume that $i$ and $j$ are strongly cospectral vertices in $G$ with neighbors $i'\neq j$ and $j'\neq i$, respectively, such that $ii'$ and $jj'$ are cut-edges that each separates $i$ and $j$ into different connected components. Then,

\[M_{ij}^{+} - M_{ij}^{-} = \left[ {\begin{array}{cccccccc}
	0 & \sqrt{\mu_1^+}-\sqrt{\mu_1^-} & \cdots & \sqrt{\mu_k^+}-\sqrt{\mu_k^-} \\
    \sqrt{\mu_1^+}-\sqrt{\mu_1^-} & 0 & \cdots & 0 \\
	\vdots & \vdots & \ddots  &  \vdots  \\ 
	\sqrt{\mu_k^+}-\sqrt{\mu_k^-} & 0 & \cdots & 0 
	\end{array} } \right],\]
 where $\sum_{\ell=1}^k \left(\sqrt{\mu_\ell^+}-\sqrt{\mu_\ell^-}\right)^2\leq \sum_{\ell=1}^k |\mu_\ell^+-\mu_\ell^-|\leq 2$.
\end{lemma}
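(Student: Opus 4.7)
The plan is to read off $M_{ij}^+ - M_{ij}^-$ directly from the arrow-matrix definitions. Both matrices share the diagonal entries $r_\ell$ for $\ell \geq 1$ (the common poles collected in \eqref{eq:k}), so their difference vanishes off the first row, the first column, and the $(0,0)$-entry, while the $(0,\ell)$ and $(\ell,0)$ entries collapse to $\sqrt{\mu_\ell^+} - \sqrt{\mu_\ell^-}$. Thus the proof reduces to two things: showing that $r_0^+ = r_0^-$ (so the $(0,0)$-entry of the difference is $0$), and establishing the two inequalities on the residues.

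To show $r_0^+ = r_0^-$, I would combine equation \eqref{eq:difalpha} with an asymptotic argument at infinity. The cut-edge hypothesis forces $i$ and $j$ to be non-adjacent, so every path $P$ from $i$ to $j$ has at least three vertices and $\deg \phi^{G \setminus P} \leq n-3 < n-2 = \deg \phi^{G \setminus \{i,j\}}$. Hence the rational function on the right-hand side of \eqref{eq:difalpha} is $O(1/t)$ at infinity. On the other hand, the partial-fraction forms in \eqref{eq:alphask} give
\[
\alpha_{ij}^+(t) - \alpha_{ij}^-(t) = (r_0^- - r_0^+) - \sum_{\ell=1}^{k} \frac{\mu_\ell^+ - \mu_\ell^-}{t - r_\ell},
\]
whose limit at infinity is $r_0^- - r_0^+$. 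Matching the two expressions forces $r_0^+ = r_0^-$.

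Next, I would compare residues at each $r_\ell$ to deduce $\sum_\ell |\mu_\ell^+ - \mu_\ell^-| \leq 2$. The residue of $\alpha_{ij}^+ - \alpha_{ij}^-$ at $r_\ell$ is $-(\mu_\ell^+ - \mu_\ell^-)$, while by Corollary~\ref{cor:partial_fraction_2} the residue at $r_\ell$ of $-2 \sum_{P:i\to j} \phi^{G\setminus P}/\phi^{G\setminus\{i,j\}}$ equals $-2\lambda_\ell$ for real numbers $\lambda_\ell$ satisfying $\sum_\ell |\lambda_\ell| \leq 1$ (with $\lambda_\ell = 0$ at any $r_\ell$ that is not a pole of that function). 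This yields $\mu_\ell^+ - \mu_\ell^- = 2\lambda_\ell$, and summing absolute values produces the desired bound. Finally, $\sum_\ell(\sqrt{\mu_\ell^+} - \sqrt{\mu_\ell^-})^2 \leq \sum_\ell |\mu_\ell^+ - \mu_\ell^-|$ follows termwise from the elementary identity $|a - b| = (\sqrt a + \sqrt b)|\sqrt a - \sqrt b| \geq (\sqrt a - \sqrt b)^2$ for $a, b \geq 0$. The most delicate ingredient is the first step: the non-adjacency observation is what makes the rational function in \eqref{eq:difalpha} proper and hence allows the asymptotic cancellation to pin down $r_0^+ = r_0^-$; after that, everything reduces to routine residue bookkeeping and an elementary inequality.
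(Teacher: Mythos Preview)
Your proof is correct and follows essentially the same route as the paper's. The only difference is cosmetic: the paper deduces $r_0^+ = r_0^-$ directly from Corollary~\ref{cor:partial_fraction_2} (whose partial-fraction expansion already has no constant term), whereas you insert a separate degree/asymptotic argument to reach the same conclusion before invoking the corollary for the residue bound---this extra step is valid but redundant once the corollary is in hand.
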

\begin{proof} By Corollary~\ref{cor:partial_fraction_2} we have that 
\[\alpha_{ij}^{+}-\alpha_{ij}^{-}=\sum_{\ell=1}^k\dfrac{\mu_\ell^+-\mu_\ell^-}{t-r_\ell} , \] 
where $\sum_{\ell=1}^k\left|\mu_\ell^+-\mu_\ell^-\right|\leq 2$. Thus, we obtain that $r_0^+=r_0^-$ and 
\[\sum_{\ell=1}^k \left(\sqrt{\mu_\ell^+}-\sqrt{\mu_\ell^-}\right)^2\leq \sum_{\ell=1}^k |\mu_\ell^+-\mu_\ell^-|\leq 2 ,\] 
where the first inequality is due to $\left(\sqrt{\mu_\ell^+}-\sqrt{\mu_\ell^-}\right)^2\leq |\mu_\ell^+-\mu_\ell^-|$, for every $\ell$. This concludes the proof.
\end{proof}

Observe that Lemma~\ref{lem:matrix_difference_2} shows that the difference of $M_{ij}^{+} - M_{ij}^{-}$ is small in a certain sense. We can use this information along with Theorem~\ref{thm:eigen_stability_2} to control the distance between the distinct eigenvalues of the matrices $M_{ij}^{+}$ and $M_{ij}^{-}$.

In order to treat the equality case of Proposition~\ref{prop:bound_trees_2} we will also need the following standard result in algebraic graph theory. Let $\epsilon_i$ denote the eccentricity of vertex $i$ in the graph $G$, that is, the maximum distance between any vertex of $G$ and vertex $i$. Recall from Theorem~\ref{thm:neutrino} that the number of zeros of $(\phi^G/\phi^{G\setminus i})$ is the size of the eigenvalue support of $i$.

\begin{lemma}[Lemma 5 in \cite{CoutinhoQuantumSizeGraph}] \label{lem:eccentricty} Let $i$ be a vertex in the graph $G$. Then, the number of zeros of $(\phi^G/\phi^{G\setminus i})$ is at least $\epsilon_i+1$.
\end{lemma}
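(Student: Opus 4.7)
The plan is to identify the number of distinct zeros of $\phi^G/\phi^{G\setminus i}$ with the dimension of the $A$-cyclic subspace $\mathcal{C}_i := \mathrm{span}\{A^k \ket i : k \geq 0\}$, and then to exhibit $\epsilon_i+1$ linearly independent vectors inside $\mathcal{C}_i$ by walking along a shortest path realizing the eccentricity.

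First, by Theorem~\ref{thm:neutrino}(a) the number of distinct poles of $\phi^{G\setminus i}/\phi^G$ (equivalently, the number of distinct zeros of $\phi^G/\phi^{G\setminus i}$ after cancellation) equals the eigenvalue support size $|\Phi_i|$. On the other hand, writing the spectral decomposition $A = \sum_r \theta_r E_r$, we have $A^k \ket i = \sum_{\theta_r \in \Phi_i} \theta_r^k\, E_r \ket i$, so $\mathcal{C}_i \subseteq \mathrm{span}\{E_r \ket i : \theta_r \in \Phi_i\}$. Because the values $\theta_r$ are distinct, a standard Vandermonde argument shows the reverse inclusion, giving $\dim \mathcal{C}_i = |\Phi_i|$.

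Next, set $d = \epsilon_i$ and pick a vertex $u$ with $\mathrm{dist}(i,u) = d$; let $i = v_0, v_1, \dots, v_d = u$ be a shortest $i$--$u$ path, so $\mathrm{dist}(i, v_\ell) = \ell$ for each $\ell$. I claim the vectors $\ket i, A\ket i, \dots, A^d \ket i$ are linearly independent. The entry of $A^k \ket i$ at coordinate $v_\ell$ equals $(A^k)_{v_\ell, i}$, the number of walks of length $k$ in $G$ from $i$ to $v_\ell$. If $k < \ell$ this is $0$ (no walk can be shorter than the distance), while if $k = \ell$ it is at least $1$ (the path itself is such a walk). Extracting the coordinates $v_0, v_1, \dots, v_d$ from the $(d+1)$ vectors therefore produces a $(d+1)\times(d+1)$ lower-triangular matrix with strictly positive diagonal entries, which is nonsingular. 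Consequently the $d+1$ vectors are linearly independent in $\mathbb{R}^{V(G)}$ and thus span a $(d+1)$-dimensional subspace of $\mathcal{C}_i$.

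Combining these two observations yields $|\Phi_i| = \dim \mathcal{C}_i \geq d+1 = \epsilon_i + 1$, which is the stated inequality. I do not anticipate any genuine obstacle: the only point requiring a little care is the identification $\dim \mathcal{C}_i = |\Phi_i|$, but this is immediate from the spectral decomposition together with the nonvanishing of $E_r \ket i$ exactly on $\Phi_i$. Everything else reduces to the combinatorial fact that walks realize distances with the correct length.
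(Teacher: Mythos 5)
Your proof is correct: the identification of the zeros of $\phi^G/\phi^{G\setminus i}$ with $|\Phi_i|$, the equality $\dim\mathcal{C}_i=|\Phi_i|$, and the triangular walk-counting argument along a shortest path all go through. The paper itself only cites this lemma (Lemma 5 of \cite{CoutinhoQuantumSizeGraph}) without reproducing a proof, and your cyclic-subspace argument is essentially the standard proof given in that reference, so there is nothing further to compare.
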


Finally, we make the crucial observation that strong cospectrality implies that the sets $\Phi_{ij}^{+}$ and $\Phi_{ij}^{-}$ are disjoint. Therefore, if we obtain zeros of $\alpha_{ij}^{+}$ and $\alpha_{ij}^{-}$ close to each other, then we obtain two (distinct) elements of $\Phi_i$ that are close.

We are finally ready for the proof of Proposition~\ref{prop:bound_trees_2}.

\begin{proof}[Proof of Proposition~\ref{prop:bound_trees_2}.]

Recall \eqref{eq:k} and \eqref{eq:alphask}, and consider $M_{ij}^{+} - M_{ij}^{-}$, just like in Lemma~\ref{lem:matrix_difference_2}. Observe that $|\Phi_{ij}^{+}|+|\Phi_{ij}^{-}| > k+1$. Thus, by the pigeonhole principle, there is some index $m$ in $\{1,\dots, k+1\}$, such that $\theta_m(M_{ij}^{+})$ and $\theta_m(M_{ij}^{-})$ are zeros of $\alpha_{ij}^{+}$ and $\alpha_{ij}^{-}$, respectively.

By Lemma~\ref{lem:matrix_difference_2} we have that the difference $M_{ij}^{+} - M_{ij}^{-}$ has precisely two nonzero eigenvalues equal to 
\begin{equation} \label{eq:eigenvalues}
 	\pm\sqrt{\sum_{\ell=1}^k \left(\sqrt{\mu_\ell^+}-\sqrt{\mu_\ell^-}\right)^2},
 \end{equation}
 and moreover we can say that $\sqrt{\sum_{\ell=1}^k \left(\sqrt{\mu_\ell^+}-\sqrt{\mu_\ell^-}\right)^2}\leq \sqrt{2}$. Therefore by Theorem~\ref{thm:eigen_stability_2} we have that

\[\big|\theta_m(M_{ij}^{+})-\theta_m(M_{ij}^{-})\big|\leq \lVert M_{ij}^{+} - M_{ij}^{-}\rVert_{op}\leq \sqrt{2}.\] 

It follows that there are zeros of $\alpha_{ij}^{+}$ and $\alpha_{ij}^{-}$ with distance smaller than, or equal to, $\sqrt{2}$. By the discussion above we conclude that there are two elements of $\Phi_i$ with distance smaller than, or equal to, $\sqrt{2}$. 

We proceed to analyze the equality case.

By Corollary~\ref{cor:partial_fraction_2} we know that $k\geq 1$, because otherwise $\alpha_{ij}^{+}$ and $\alpha_{ij}^{-}$ are equal. We claim that $k=1$, and
\[\alpha_{ij}^{+}(t)=t-r-\dfrac{2}{t-r} \quad \text{and} \quad \alpha_{ij}^{-}(t)=t-r\]
for some real number $r$.

Assume that $|\theta_m(M_{ij}^{+})-\theta_m(M_{ij}^{-})|=\sqrt{2}$. By the equality case in Theorem~\ref{thm:eigen_stability_2} there exists a nonzero vector $\ket v$ spanned by the eigenvectors corresponding to the two nonzero eigenvalues of $M_{ij}^{+} - M_{ij}^{-}$ which is an $m$-th eigenvector of both $M_{ij}^{+}$ and $M_{ij}^{-}$.
Since
\[
	(M_{ij}^{+} - M_{ij}^{-}) \ket v = (\theta_m(M_{ij}^{+})-\theta_m(M_{ij}^{-})) \ket v ,
\]
and $(M_{ij}^{+} - M_{ij}^{-})$ has precisely two nonzero eigenvalues given by \eqref{eq:eigenvalues}, it follows from Lemma~\ref{lem:matrix_difference_2} that 
\[
	2 \leq (\theta_m(M_{ij}^{+})-\theta_m(M_{ij}^{-}))^2 = \sum_{\ell=1}^k \left(\sqrt{\mu_\ell^+}-\sqrt{\mu_\ell^-}\right)^2 \leq \sum_{\ell=1}^k |\mu_\ell^+-\mu_\ell^-|\leq 2,
\]
from which follows that $\left(\sqrt{\mu_\ell^+}-\sqrt{\mu_\ell^-}\right)^2 = |\mu_\ell^+-\mu_\ell^-|$ for every $\ell$. 

Recall from \eqref{eq:alphask} that $\mu_\ell \geq 0$ for all $\ell$. Thus, for every $\ell$, either $\mu_\ell^+=0$ and $\mu_\ell^->0$, or $\mu_\ell^+>0$ and $\mu_\ell^-=0$. The eigenvector $\ket v$ can be easily computed from the expression of $(M_{ij}^{+} - M_{ij}^{-})$, and the equations $M_{ij}^{\pm} \ket v = \theta_m(M_{ij}^{\pm}) \ket v$ are given by
\begin{align*}
	& \left[ {\begin{array}{cccccccc}
	r_0  & \sqrt{\mu_1^\pm} & \sqrt{\mu_2^\pm} & \cdots  & \sqrt{\mu_k^\pm}  \\
    \sqrt{\mu_1^\pm} & r_1 & 0 & \cdots  & 0  \\
    \sqrt{\mu_2^\pm} & 0 & r_2 & \cdots  & 0  \\
	\vdots   & \vdots   & \vdots   &  \ddots  &  \vdots  \\ 
	\sqrt{\mu_k^\pm}   & 0 & 0 & \cdots  & r_k \\ 
	\end{array} } \right]\pmat{ a \\ b(\sqrt{\mu_1^+}-\sqrt{\mu_1^-}) \\ \vdots \\ b(\sqrt{\mu_k^+}-\sqrt{\mu_k^-})} = \hspace{3cm}\\
	& \hspace{7cm} = \theta_m(M_{ij}^{\pm}) \pmat{ a \\ b(\sqrt{\mu_1^+}-\sqrt{\mu_1^-}) \\ \vdots \\ b(\sqrt{\mu_k^+}-\sqrt{\mu_k^-})}.
\end{align*}
Observe that $b = 0$ implies $a = 0$, so we have $b \neq 0$. Thus, for every $\ell$ either
\begin{itemize}
	\item $\mu_\ell^-=0$, $a+br_\ell=\theta_m(M_{ij}^{+})~b$ and $br_\ell=\theta_m(M_{ij}^{-})~b$, and therefore
	\[r_\ell = \theta_m(M_{ij}^{-}) \quad \text{and} \quad \dfrac{a}{b}=\theta_m(M_{ij}^{+})-\theta_m(M_{ij}^{-}); \]
	or
	\item  $\mu_\ell^+=0$, $a+br_\ell=\theta_m(M_{ij}^{-})~b$ and $br_\ell=\theta_m(M_{ij}^{+})~b$, and therefore
	\[r_\ell = \theta_m(M_{ij}^{+}) \quad \text{and} \quad \dfrac{a}{b}=\theta_m(M_{ij}^{-})-\theta_m(M_{ij}^{+}). \]
\end{itemize}

Since $|\theta_m(M_{ij}^{+})-\theta_m(M_{ij}^{-})|=\sqrt{2}$, it cannot happen that $(a/b)$ is simultaneously equal to a positive and negative number, therefore it must be that, for every $\ell$, either $\mu_\ell^-=0$ and $r_\ell = \theta_m(M_{ij}^{-})$; or $\mu_\ell^+=0$ and $r_\ell = \theta_m(M_{ij}^{+})$.

The $r_\ell$'s are by definition distinct, and $\sum_{\ell=1}^k|\mu_\ell^+-\mu_\ell^-|=2$, therefore $k=1$, and, either 
\begin{itemize}
	\item $\mu_1^+=2$, $\mu_1^-=0$ and $r_1=\theta_m(M_{ij}^{-})$, or
	\item $\mu_1^+=0$, $\mu_1^-=2$ and $r_1=\theta_m(M_{ij}^{+})$. 
\end{itemize}

Finally, recall that the principal eigenvector of the adjacency matrix $A$ has positive entries and therefore the largest element of $\Phi_i$ is always in $\Phi_{ij}^+$. A simple analysis concludes that, if we denote by $r=r_0$, it must be that $\alpha_{ij}^+(t)=t-r-\frac{2}{t-r}$ and $\alpha_{ij}^-(t)=t-r$, proving the claims.

It remains to show that the graph $G$ is in fact $P_3$. First, notice that,

\[\dfrac{\phi^{G\setminus i}}{\phi^{G\setminus \{ i,j\}}}(t)=\frac{\alpha_{ij}^+(t)+\alpha_{ij}^-(t)}{2}=t-r-\dfrac{1}{t-r}.\]

By Lemma~\ref{lem:eccentricty} it follows that $j$ has eccentricity $1$ in $G\setminus i$. By symmetry we obtain the analogous conclusion for the vertex $i$. It follows that $i'=j'$ and that the only neighbors of this vertex are $i$ and $j$. 

By Corollary~\ref{cor:cristoffel-darboux} and the definition of $\alpha_{ij}^+$ and $\alpha_{ij}^-$, we have that 
\[\frac{\phi^G}{\phi^{G\setminus \{i,j\}}}=\alpha_{ij}^+\cdot \alpha_{ij}^-,\] 
whence
\[\dfrac{\phi^G}{\phi^{G\setminus i}} = \dfrac{\phi^G}{\phi^{G\setminus \{i, j\}}}\cdot \dfrac{\phi^{G\setminus \{i, j\}}}{\phi^{G\setminus i}}=\alpha_{ij}^+\cdot \alpha_{ij}^-\cdot \dfrac{\phi^{G\setminus \{i, j\}}}{\phi^{G\setminus i}}=\dfrac{(t-r)((t-r)^2-2)}{(t-r)^2-1}.\]

By Lemma~\ref{lem:eccentricty}, $i$ has eccentricity at most $2$ in $G$, from which follows that the only neighbor of $j$ in $G$ is $i'=j'$. By symmetry we obtain that the only neighbor of $i$ in $G$ is also $i'=j'$. We conclude that the graph $G$ consists only of the vertices $i$, $j$ and $i'=j'$.
\end{proof}


\section{Final remarks}\label{sec:problems}

The technology in this paper (together with Lemma $IV.3.2$ in~\cite{BhatiaMatrixAnalysis}) can also be used to provide a proof that if $i$ and $j$ are cospectral and separated by a bridge, then there are two eigenvalues in the support of $i$ whose distance is at most $1$, if and only if the graph is not $P_2$. This is precisely a result obtained in \cite{CoutinhoGodsilJulianovanBommel} with a different technique. 

The attentive reader may have noticed that we did not require $i$ and $j$ to be strongly cospectral in the preceding paragraph. This is the case because a result of Godsil and Smith~\cite[Theorem 9.1]{godsil2017strongly} implies that if $i$ and $j$ are cospectral and $ij$ is a bridge, then $i$ and $j$ are strongly cospectral. So one wonders: is Proposition~\ref{prop:bound_trees_2} also valid assuming only cospectrality? This turns out to be far from true.

There exists a well-known construction of integral trees of arbitrarily large diameter due to Csikvári~\cite{csikvari2010integral}. This construction allows for the spectrum to be specified in advance.

\begin{theorem}[Theorem 11 in \cite{csikvari2010integral}]~\label{thm:csikvari} For every set $S$ of positive integers there is a tree whose positive
eigenvalues are exactly the elements of $S$. If the set $S$ is different from the set $\{1\}$ then the constructed tree will have diameter $2|S|$.
\end{theorem}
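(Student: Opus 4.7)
The plan is to construct $T = T_n$ by induction on $n = |S|$, writing $S = \{a_1 < a_2 < \cdots < a_n\}$. For the base case $n = 1$ with $a_1 \geq 2$, let $T_1$ be the star $K_{1, a_1^2}$ rooted at its center: its spectrum is $\{+a_1, -a_1, 0^{(a_1^2-1)}\}$, its positive eigenvalue set is $\{a_1\}$, and its diameter is $2$. The exception $S = \{1\}$ is realized by $P_2$, whose diameter is $1$.

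For the inductive step, assume a rooted tree $(T_{n-1}, r)$ with positive eigenvalue set $\{a_1,\dots,a_{n-1}\}$, diameter $2(n-1)$, and $\epsilon_r = n-1$. Construct $T_n$ by taking $m$ vertex-disjoint copies of $T_{n-1}$, adding a fresh vertex $v$, joining $v$ to each copy of $r$, and rooting at $v$. Expanding $\phi^{T_n}$ at the cut-vertex $v$ (an iterated application of Lemma~\ref{lem:wronskian} to each component of $T_n\setminus v$) yields
\[
\phi^{T_n}(x) \;=\; \phi^{T_{n-1}}(x)^{m-1}\bigl(x\,\phi^{T_{n-1}}(x) - m\,\phi^{T_{n-1}\setminus r}(x)\bigr).
\]
The factor $\phi^{T_{n-1}}(x)^{m-1}$ contributes only eigenvalues already in the spectrum of $T_{n-1}$, so the positive eigenvalues $\{a_1,\dots,a_{n-1}\}$ persist without any new addition. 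By Lemma~\ref{lem:interlacing_2}, the quotient $\phi^{T_{n-1}}/\phi^{T_{n-1}\setminus r}$ is strictly increasing on each of its branches and behaves like $x$ at infinity; therefore the remaining factor $P_m(x) := x\,\phi^{T_{n-1}}(x) - m\,\phi^{T_{n-1}\setminus r}(x)$ has exactly one root strictly greater than the largest eigenvalue of $T_{n-1}$, while its other roots interlace those of $\phi^{T_{n-1}}$ and contribute no new positive eigenvalue. This rightmost new root depends continuously and monotonically on $m$ and sweeps $(a_{n-1},\infty)$ as $m$ varies.

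Setting this new root equal to $a_n$ gives the explicit condition
\[
m \;=\; \frac{a_n\,\phi^{T_{n-1}}(a_n)}{\phi^{T_{n-1}\setminus r}(a_n)},
\]
which is positive since $a_n$ exceeds every eigenvalue of $T_{n-1}$. The main technical obstacle is ensuring that this $m$ is a positive \emph{integer} at every stage. Csikvári's trick is to strengthen the inductive hypothesis by tracking the rational values of $\phi^{T_{n-1}}(a_k)/\phi^{T_{n-1}\setminus r}(a_k)$ at every target $a_k \in S$ simultaneously; by prefacing the construction with auxiliary modifications (coalescing additional copies of stars $K_{1,\ell^2}$ at the root, or iterating the above operation with carefully chosen multiplicities before targeting $a_n$) one can rescale this ratio to any prescribed rational value with controlled denominator, forcing $m \in \mathds{Z}_{>0}$ without disturbing the support already placed. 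Finally, since the new root $v$ is at distance $1$ from each copy of $r$, we have $\epsilon_v(T_n) = 1 + \epsilon_r(T_{n-1}) = n$, and every pair of leaves from distinct copies realizes this eccentricity through $v$, so $\operatorname{diam}(T_n) = 2n$, closing the induction.
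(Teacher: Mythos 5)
You are attempting to reprove a result that this paper only cites (Theorem~\ref{thm:csikvari} is quoted from Csikv\'{a}ri's paper and no proof is given here), so the relevant comparison is with the cited proof; and your inductive step has a genuine gap. The decisive problem is the claim that in the factor $P_m(x)=x\,\phi^{T_{n-1}}(x)-m\,\phi^{T_{n-1}\setminus r}(x)$ all roots other than the top one ``contribute no new positive eigenvalue.'' By Theorem~\ref{thm:neutrino}(a) and Lemma~\ref{lem:interlacing_2}, the zeros of $P_m$ strictly interlace the eigenvalue support of the root $r$: there is exactly one zero in each open gap between consecutive support elements, and by strictness these zeros cannot coincide with the support elements; they are genuinely new eigenvalues, generically irrational, and certainly not forced to lie in $S$. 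You can see the failure already at $|S|=3$ inside your own construction. Take $T_1=K_{1,a_1^2}$ and $T_2$ obtained by joining a new root to $m_2=a_2^2-a_1^2$ copies of $T_1$ (here your formula for $m$ does give an integer and $T_2$ has positive spectrum $\{a_1,a_2\}$). At the next step the new factor is
\[
\phi^{T_1}(x)^{m_2-1}\,x^{a_1^2-1}\bigl(x^4-(a_2^2+m)x^2+m\,a_1^2\bigr),
\]
and if the largest zero of the quartic is $a_3$, its other positive zero $\rho$ satisfies $\rho^2a_3^2=m\,a_1^2$ and $\rho^2+a_3^2=a_2^2+m$; demanding $\rho\in\{a_1,a_2,a_3\}$ forces $a_1=a_2$ in every case. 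So no choice of $m$, integer or not, keeps the positive spectrum inside $S$ while placing $a_3$: the single parameter $m$ cannot control all of the new roots that the operation ``join $m$ copies at a new root'' creates. The vague ``auxiliary modifications'' you invoke are aimed only at the integrality of $m$ and do not address this obstruction, which you never acknowledge.

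The integrality issue is a second, unrepaired gap: the appeal to ``Csikv\'{a}ri's trick \dots rescale this ratio to any prescribed rational value with controlled denominator'' describes no actual mechanism. Csikv\'{a}ri's proof is not a perturbative tuning argument at all: in essence he defines an explicit recursive family of trees whose \emph{entire} spectrum is computed in closed form, the positive eigenvalues being square roots of partial sums of the defining parameters; choosing those parameters as the telescoping differences $a_i^2-a_{i-1}^2$ (positive integers for free) yields positive spectrum exactly $S$, with no extraneous eigenvalues possible and no integrality condition to verify, and the diameter $2|S|$ is read off from the explicit shape. Your base case and the eccentricity/diameter bookkeeping are fine; what is missing is precisely this closed-form control of the whole spectrum, and without it the induction as stated collapses at the third step.
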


By inspecting the proof of Theorem~\ref{thm:csikvari} one can see that if the maximum distance between consecutive elements in $S$ is at least $2$, then the trees have a non-trivial automorphism, thus, in particular, they have cospectral pairs of vertices. It is also easy to guarantee that their distance will be $2$. Putting together these observations we have the following result.

\begin{theorem}\label{thm:bound_tree_fails} There exists a sequence of integral trees $(T_n)$ with cospectral pairs of vertices at distance $2$ such that the minimum distance between elements in $\Phi(T_n)$ tends to $\infty$ as $n\to \infty$.
\end{theorem}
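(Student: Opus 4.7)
The plan is to exhibit an explicit sequence of integral trees by invoking Csikvári's construction with sets $S_n$ whose minimum element and whose consecutive gaps both grow with $n$. Concretely, I would set $S_n = \{n, 2n\}$ for each integer $n \geq 2$ and apply Theorem~\ref{thm:csikvari} to obtain an integral tree $T_n$ whose positive eigenvalues are exactly $\{n, 2n\}$. Since $|S_n| = 2 \neq 1$, the theorem applies, and moreover the resulting tree has diameter $2|S_n| = 4$.

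Next I would control the minimum gap in the whole spectrum using bipartiteness. Trees are bipartite, so $\Phi(T_n) \subseteq -S_n \cup \{0\} \cup S_n = \{-2n,-n,0,n,2n\}$, where the inclusion of $0$ depends on the parity of $|V(T_n)|$. In every case the minimum distance between two distinct elements of $\Phi(T_n)$ is at least $n$: the gap between $-n$ and $n$ is $2n$, the gaps between $n$ and $2n$ (and between $-2n$ and $-n$) are $n$, and if $0 \in \Phi(T_n)$ then the gap between $0$ and $\pm n$ is $n$. Hence the minimum distance between elements of $\Phi(T_n)$ tends to $\infty$.

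The remaining task, and the main obstacle, is to verify that $T_n$ contains a cospectral pair of vertices at distance exactly $2$. For this I would inspect the recursive step in Csikvári's construction, which builds a tree realizing a prescribed positive spectrum by attaching pendant structures to a smaller tree. When the consecutive gap in $S$ is at least $2$, the construction forces the appearance of two isomorphic pendant subtrees rooted at a common internal vertex. The map that swaps these two subtrees (and fixes everything else) is a non-trivial automorphism of $T_n$; its two swapped roots are leaves of these subtrees whose common parent is the internal vertex, so they lie at distance $2$ and are automorphic, hence cospectral. Combining this with the previous paragraph completes the proof.

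The routine verification step that I expect to consume most of the work is precisely this inspection of Csikvári's recursion: one must track which vertices play symmetric roles as the construction unwinds from $\{n,2n\}$ down to base cases, and confirm that a swappable pair at distance $2$ always survives to the top-level tree. The authors signal that this is straightforward (\emph{``easy to guarantee that their distance will be $2$''}), so I would treat it as a direct but careful reading of \cite{csikvari2010integral} rather than a new argument.
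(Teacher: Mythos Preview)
Your proposal is correct and follows essentially the same approach as the paper: invoke Csikv\'ari's construction with sets $S$ whose gaps are large enough, then appeal to an inspection of that construction to produce an automorphism swapping two vertices at distance~$2$. The paper does not even fix a specific choice of $S_n$; your concrete choice $S_n=\{n,2n\}$ and the bipartite-spectrum calculation are a clean instantiation of exactly what the paper sketches, and your deferral of the distance-$2$ verification to a reading of \cite{csikvari2010integral} mirrors the paper's own ``easy to guarantee'' remark.
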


Theorem~\ref{thm:bound_tree_fails} is in sharp contrast to the Proposition~\ref{prop:bound_trees_2}, and shows that the hypothesis of strong cospectrality cannot be reduced to cospectrality.

The results in this paper can also be stated to weighted graphs (with or without loops), in particular, Proposition~\ref{prop:bound_trees_2} holds just the same as long as the bridges have weight $1$ (the equality case will conclude the resulting graph is $P_3$ up to diagonal translations). The weighted generalizations can be easily obtained by replacing all occurrences of $\phi^{G \backslash P}$ in Section~\ref{sec:strategy} by $\rho_P  \cdot \phi^{G \backslash P}$, where $\rho_P$ is the product of the weights in the path. The results in Subsection~\ref{sec:pst} hold for integer weighted graphs. In particular, the technology developed in this paper provides a completely alternative proof to the result that no trees on more than two vertices admit perfect state transfer according to the Laplacian matrix \cite{CoutinhoLiu2}. For reference, we state the most general weighted version we can obtain:

\begin{theorem}\label{thm:bound_general} Let $i$ and $j$ be strongly cospectral vertices in the weighted graph $G$, possibly connected by an edge of weight $a_{i,j}$. Assume that every path from $i$ to $j$, with the exception of the trivial path through the edge $ij$, goes through the neighbors $i_1,\dots, i_u$ and $j_1,\dots, j_v$ of $i$ and $j$, respectively. Then, the minimum distance between elements in $\Phi_i$ is smaller than, or equal to 
\[a_{i,j}+\sqrt{a_{i,j}^2+2\sqrt{\left(\sum_{k=1}^u a_{i,i_k}^2\right)\cdot \left(\sum_{\ell=1}^v a_{j_\ell,j}^2\right)}}.\]
\end{theorem}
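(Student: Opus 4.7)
The plan is to execute the proof strategy of Proposition~\ref{prop:bound_trees_2} in the weighted setting. All identities in Sections~\ref{sec:strategy}--\ref{sec:strong_cospectrality} remain valid once $\phi^{G\setminus P}$ is replaced by $\rho_P \cdot \phi^{G\setminus P}$, with $\rho_P$ the product of weights along $P$; moreover, the positivity needed to invoke Lemmas~\ref{lem:interlacing_1} and~\ref{lem:interlacing_2} persists because the diagonal entries of the eigenprojectors of the weighted adjacency matrix remain nonnegative. The two new ingredients are a weighted refinement of the Cauchy--Schwarz bound in Lemma~\ref{prop:partial_fraction}, and a constant shift contributed to $M_{ij}^+ - M_{ij}^-$ by the direct edge $ij$ (if present).

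For the refined bound, I split the path sum into the trivial edge-path $P = ij$, which contributes $a_{i,j}\cdot \phi^{G\setminus \{i,j\}}$, plus all non-trivial paths, which by hypothesis pass through some $i_k$ and some $j_\ell$. Rerunning the Cauchy--Schwarz argument of Lemma~\ref{prop:partial_fraction} with $\lVert \ket{\chi_i} \rVert^2 = \sum_k a_{i,i_k}^2$ and $\lVert \ket{\chi_j} \rVert^2 = \sum_\ell a_{j_\ell,j}^2$ yields
\[\dfrac{\sum_{P:i\to j} \rho_P \phi^{G\setminus P}}{\phi^{G\setminus \{i,j\}}}(t) = a_{i,j} + \sum_\ell \dfrac{\lambda_\ell}{t-s_\ell}, \qquad \sum_\ell|\lambda_\ell| \leq \sqrt{\Bigl(\sum_k a_{i,i_k}^2\Bigr)\Bigl(\sum_\ell a_{j_\ell,j}^2\Bigr)}.\]
Substituting into~\eqref{eq:difalpha} and matching partial fractions gives $r_0^+ - r_0^- = 2a_{i,j}$ together with $\sum_\ell|\mu_\ell^+-\mu_\ell^-|\leq 2\sqrt{(\sum_k a_{i,i_k}^2)(\sum_\ell a_{j_\ell,j}^2)}$, so that $M_{ij}^+ - M_{ij}^-$ now has $(0,0)$-entry equal to $2a_{i,j}$ in addition to the entries $\sqrt{\mu_\ell^+}-\sqrt{\mu_\ell^-}$ in its first row and column (and zeros elsewhere).

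This matrix has rank~$2$; restricting it to the invariant two-dimensional subspace spanned by $e_0$ and the unit vector in the direction of its first column's off-diagonal part produces the symmetric $2\times 2$ matrix with diagonal $(2a_{i,j},0)$ and off-diagonal $\lVert v \rVert$, where $\lVert v \rVert^2 = \sum_\ell (\sqrt{\mu_\ell^+}-\sqrt{\mu_\ell^-})^2$. Its eigenvalues are $a_{i,j} \pm \sqrt{a_{i,j}^2 + \lVert v \rVert^2}$, and invoking the elementary inequality $(\sqrt{a}-\sqrt{b})^2 \leq |a-b|$ to bound $\lVert v \rVert^2 \leq \sum_\ell|\mu_\ell^+-\mu_\ell^-|$ yields
\[\lVert M_{ij}^+ - M_{ij}^- \rVert_{op} = a_{i,j} + \sqrt{a_{i,j}^2 + \lVert v \rVert^2} \leq a_{i,j} + \sqrt{a_{i,j}^2 + 2\sqrt{\Bigl(\sum_k a_{i,i_k}^2\Bigr)\Bigl(\sum_\ell a_{j_\ell,j}^2\Bigr)}}.\]

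The pigeonhole argument from Proposition~\ref{prop:bound_trees_2} then carries over verbatim: since $|\Phi_{ij}^+|+|\Phi_{ij}^-| > k+1$, some index $m$ has $\theta_m(M_{ij}^\pm)$ a zero of $\alpha_{ij}^\pm$; Theorem~\ref{thm:eigen_stability_2} bounds their distance by the operator norm above; and strong cospectrality makes $\Phi_{ij}^+$ and $\Phi_{ij}^-$ disjoint subsets of $\Phi_i$, producing two distinct elements of $\Phi_i$ within the claimed distance. The main non-routine step should be the operator-norm computation for the rank-$2$ matrix carrying the extra diagonal entry contributed by $a_{i,j}$, which replaces the simpler $\sqrt{2}$ bound from the unweighted proof; every other step is a direct weighted analogue of the argument already carried out for Proposition~\ref{prop:bound_trees_2}.
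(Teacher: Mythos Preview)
Your proposal is correct and follows essentially the approach the paper indicates: carry the proof of Proposition~\ref{prop:bound_trees_2} to the weighted setting via the substitution $\phi^{G\setminus P}\mapsto \rho_P\,\phi^{G\setminus P}$, split off the direct edge $ij$ to produce the constant shift $2a_{i,j}$ in the $(0,0)$-entry of $M_{ij}^+-M_{ij}^-$, and read off the operator norm $a_{i,j}+\sqrt{a_{i,j}^2+\lVert v\rVert^2}$ of the resulting rank-two matrix. The paper does not give a detailed proof of Theorem~\ref{thm:bound_general} beyond the remark that the machinery of Sections~\ref{sec:strategy}--\ref{sec:strong_cospectrality} carries over verbatim, so your write-up in fact supplies exactly the computation the paper leaves implicit.
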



\subsection*{Acknowledgements}

All authors acknowledge the support of FAPEMIG. Gabriel Coutinho acknowledges the support of CNPq.

\bibliographystyle{plain}
\IfFileExists{references.bib}
{\bibliography{references.bib}}
{\bibliography{../references}}

	
\end{document}